\newtheorem{teor}{Theorem}[section]
\newtheorem{prop}[teor]{Proposition}
\newtheorem{coro}[teor]{Corollary}
\newtheorem{lema}[teor]{Lemma}
\newtheorem{defi}{Definition}[section]
\newtheorem{ejem}{Example}[section]
\newtheorem{nota}{Remark}[section]
\newenvironment{proof}[1][Proof]{\textbf{#1.} }{\ \rule{0.5em}{0.5em}}
\begin{document}
\def\N{\mathbb{N}}
\def\Z{\mathbb{Z}}
\def\K{\mathbb{K}}
\def\R{\mathbb{R}}
\def\D{\mathbb{D}}
\def\C{\mathbb{C}}
\def\T{\mathbb{T}}

\def\dint{\int}
\def\intc{\int_0^1}
\def\intD{\int_{\D}}
\def\intpi{\int_0^{2\pi}}
\def\sumi{\sum_{n=0}^\infty}
\def\dfrac{\frac}
\def\dsum{\sum}
\def\cuadro{\hfill $\Box$}
\def\qed{\hfill $\Box$}
\def\prueba{\vskip10pt\noindent{\it PROOF.}\hskip10pt}
\newcommand{\cuadrosymb}{\mbox{ }~\hfill~{\rule{2mm}{2mm}}}
%%%%%%%%%%%%%%%%%%%%%%%%%%%%%%%%%%%%%%%%%%%%%%%%%%%%%%%%%%%%%%%%
\providecommand{\norm}[1]{\lVert#1\rVert}
\providecommand{\Norm}[1]{\left\lVert#1\right\rVert}
\providecommand{\presc}[2]{\langle #1 ,#2\rangle}
\providecommand{\Presc}[2]{\left\langle #1 ,#2\right\rangle}
\providecommand{\lcn}[2]{\mathcal{L}(#1,#2)}
\providecommand{\convsot}[1]{\xrightarrow[]{SOT}#1}
\providecommand{\convwot}[1]{\xrightarrow[]{WOT}#1}
\providecommand{\normop}[1]{\lVert#1\rVert_{{op_2}}}
\providecommand{\normb}[1]{\lVert#1\rVert_{B(\ell^2)}}
\providecommand{\normm}[1]{\lVert#1\rVert_{M(\ell^2)}}
\providecommand{\normu}[1]{\lVert#1\rVert_{U(\ell^2)}}
\providecommand{\normc}[1]{\lVert#1\rVert_{C(\ell^2)}}
\providecommand{\normbv}[1]{\lVert#1\rVert_{BV(\ell^2)}}
\providecommand{\normel}[1]{\lVert#1\rVert_{E(\ell^2)}}
\providecommand{\normcr}[1]{\lVert#1\rVert_{C_r(\ell^2)}}
\providecommand{\normbh}[1]{\lVert#1\rVert_{\Bl2}}
\providecommand{\normmh}[1]{\lVert#1\rVert_{\Ml2}}
\providecommand{\normbx}[1]{\lVert#1\rVert_{B(\ell^2(X))}}
\providecommand{\normmx}[1]{\lVert#1\rVert_{M(\ell^2(X))}}
\providecommand{\normms}[1]{\lVert#1\rVert_{ms}}
\providecommand{\Norm}[1]{\left\lVert#1\right\rVert}
\providecommand{\lk}[1]{\mathcal{L}_k^{#1}}

\providecommand{\htild}[1]{\tilde{H}^2(\mathbb{T},\mathcal{L}(H))}
\providecommand{\ltwolh}[1]{\ell^2(\mathcal{L}(H))}
\providecommand{\ltwoh}[1]{\ell^2(H)}
\providecommand{\ltwosot}[1]{\ell^2_{SOT}(\mathcal{L}(H))}
\providecommand{\linftylh}[1]{\ell^\infty(\mathcal{L}(H))}

\providecommand{\bltwoh}[1]{\mathcal{B}\left(\ell^2(H)\right)}
%%%%%%%%%%%%%%%%%%%%%%%%%%%%%%%%%%%%%%%%%%%%%%%%%%%%%%%%%%%%%%%%%%%%%%%
\def\L{{\mathcal L}}
\def\P{{\cal P}}
\def\B{\mathcal B}
\def\KO{{\cal K}}
\def\J{\mathcal J}

\def\M{\mathcal M}

\def\botimes{{\bf \otimes}}
\def\ba{\begin{eqnarray*}}
\def\ea{\end{eqnarray*}}

\def\be{\begin{equation}}

\def\ee{\end{equation}}

\def\A{{\bf A}}
\def\bB{{\bf B}}
\def\bT{{\bf T}}
\def\x{{\bf x}}
\def\y{{\bf y}}
\def\z{{\bf z}}

\def\dt{\frac{dt}{2\pi}}
\def\ds{\frac{ds}{2\pi}}

\def\d{\displaystyle}

\def\Tkj{T_{kj}}
\def\Skj{S_{kj}}
\def\Rk{{\bf R_k}}
\def\Dl{{\bf D_l}}
\def\Cj{{\bf C_j}}
\def\ej{{\bf e_j}}
\def\ek{{\bf e_k}}

\def\la{\langle}
\def\ra{\rangle}
\def\e{\varepsilon}

\def\Cl2{\mathcal C(\ell^2(H))}
\def\Pl2{\mathcal P(\ell^2(H))}
\def\Bl2{\mathcal B(\ell^2(H))}
\def\Ml2{\mathcal M_r(\ell^2(H))}
\def\l2{\ell^2(H)}

\def\sj{\sum_{j=1}^\infty}
\def\sk{\sum_{k=1}^\infty}

\def\fk{\varphi_k}
\def\fj{\varphi_j}

%%%%Spaces

\def\lsot{\ell^2_{SOT}(\N,\B(H))}
\def\lsott{\ell^2_{SOT}(\N^2,\B(H))}

\title{New spaces of matrices with operator entries}
\author{Oscar Blasco, Ismael Garc\'{\i}a-Bayona\thanks{%
Partially supported by  MTM2014-53009-P(MINECO Spain) and  FPU14/01032 (MECD Spain)}}
\date{}

\maketitle

\begin{abstract} In this paper, we will consider matrices with entries in the space of operators $\mathcal{B}(H)$, where $H$ is a separable Hilbert space and consider the class of matrices that can be approached in the operator norm by matrices with a finite number of diagonals. We will use the Schur product with Toeplitz matrices generated by summability kernels to  describe such a class and show that in the case of Toeplitz matrices it can be identified with the space of continuous functions with values in $\mathcal B(H)$. We shall also introduce matriceal versions with operator entries of classical spaces of holomorphic functions such as $H^\infty(\D)$ and $A(\D)$ when dealing with upper triangular matrices.
\end{abstract}

%AMS Subj. Class: Primary 46E40, Secondary 47A56; 15B05
AMS Subj. Class: Primary 47L10; 46E40, Secondary 47A56; 15B05; 46G10.

Key words: Schur product; Toeplitz matrix; Schur multiplier; vector-valued measure; vector-valued function.

\section{Introduction. }

Throughout the paper  $(H, \|\cdot\|)$ and $\B(H)$  stand for  a separable Hilbert space and the space of bounded linear operators from $H$ into itself. We also use the notations $\ell^2(H)$ for the space of sequences ${\bf x}=(x_n)$ with  $x_n\in H$ such that $\|{\bf x}\|_{\ell^2(H)}=(\sum_{n=1}^\infty \|x_n\|^2)^{1/2}<\infty$. In the sequel we write $ \la \cdot,\cdot\ra$ and $ \ll\cdot,\cdot \gg $ for the scalar products in  $H$  and $\ell^2(H)$ respectively, that is $\ll\x,\y\gg= \sum_{n=1}^\infty \la x_n,y_n\ra$
 and we use the notation $x\ej=(0,\cdots,0,x,0,\cdots) $ for the element in $\l2$ in which $x\in H$ is placed in the $j$-th coordinate for $j\in \N$. As usual $c_{00}(H)=span\{x\ej:x\in H, j\in \N\}$.

 Given a complex Banach space $X$ we  denote  by $P(\T,X)$, $C(\T,X)$,  $L^p(\T, X)$ or $M(\T,X)$  the space of $X$-valued trigonometric polynomials, $X$-valued continuous functions,  $X$-valued strongly measurable functions with $\|f\|_{L^p(\T,X)}=(\int_0^{2\pi} \|f(e^{it})\|_X^p\dt)^{1/p}<\infty$ for $1\le p \le \infty$ (with the usual modification for $p=\infty$) and  regular $X$-valued measures of bounded variation respectively. As usual for $X=\C$ we simply write $\ell^2$, $P(\T)$, $C(\T)$, $L^p(\T)$ and $M(\T)$.

 Finally we write $\D$ for the unit disc in $\C$ and $P_a(\D,X)=\{\sum_{k=0}^N x_kz^k: x_k\in X, N\in \N\}$ for the space of analytic polynomials with values in $X$. As usual $H^\infty(\D,X)$ and $A(\D, X)$ stand for the bounded vector-valued holomorphic functions and the vector-valued disc algebra, that is the closure of analytic polynomials in $H^\infty(\D,X)$. Since we only need the basic theory, which extends to the vector-valued setting from the scalar-valued one, we refer to the books  \cite{D,G} for possible results to be used.

Of course, operators $T\in\B(\ell^2)$ can be identified with matrices whose entries are given by $\alpha_{kj}=\la T(e_j),e_k\ra$.
Given two matrices $A=(\alpha_{kj})$ and $B=(\beta_{kj})$ with complex entries, their Schur product  is defined by $A*B= (\alpha_{kj}\beta_{kj})$. This operation was shown by J. Schur \cite{Schur} to endow the space $\B(\ell^2)$  with a structure of Banach algebra, that is if $A, B\in \B(\ell^2)$  then $A*B \in \B(\ell^2)$. Moreover
\be \label{ts} \|A*B \|_{\B(\ell^2)}\le \|A \|_{\B(\ell^2)}\|B \|_{\B(\ell^2)}.\ee
The reader is referred also to  either \cite[Proposition 2.1]{Be} or \cite[Theorem 2.20]{PP} for a different proof.
More generally, a matrix $A=(\alpha_{kj})$ is said to be a Schur multiplier, to be denoted by $A\in {\mathcal M}(\ell^2)$, whenever
$A* B\in \B(\ell^2)$  for any $B\in \B(\ell^2)$. In particular, Schur's result establishes that $\B(\ell^2)\subseteq {\mathcal M}(\ell^2)$.
For the study of Schur multipliers we refer the reader to \cite{AP,Be, PP}.

Operators in $\B(\ell^2)$ and multipliers in ${\mathcal M}(\ell^2)$ are well understood for particular classes of matrices. We recall here the results for Toeplitz matrices, that is matrices with constant diagonals, $A=(\alpha_{kj})$ with  $\alpha_{kj}=\gamma_{j-k}$ for a  given sequence of complex numbers $(\gamma_l)_{l\in \Z}$. The characterization of Toeplitz matrices which define bounded operators in $\B(\ell^2)$ goes back to work of Toeplitz in \cite{T},
 who showed that  a Toeplitz matrix $A=(\alpha_{kj})\in \B(\ell^2) $ if and only if there exists $f\in L^\infty(\T) $ such that $\alpha_{kj}=\hat f(j-k), \quad k,j\in \N $. Furthermore
\be \label{tt} \|A\|_{\B(\ell^2)}=\|f\|_{L^\infty(\T)}. \ee

On the other hand, the study of Toeplitz matrices which define Schur multipliers  goes back to  Bennet \cite{Be}, who showed that
  a Toeplitz matrix $A=(\alpha_{kj})\in \M(\ell^2) $ if and only if there exists $\mu\in M(\T)$ such that $\alpha_{kj}=\hat\mu(j-k) $ for $k,j\in \N$. Furthermore
  \be \|A\|_{\mathcal M(\ell^2)}= \|\mu\|_{M(\T)}.\label{tb} \ee

The reader is also referred to  \cite{AP, BG, PP} for the proofs of the above results concerning Toeplitz matrices.

Throughout the rest of the paper we write
 $\A=(\Tkj)$ where $\Tkj\in \B(H)$ and we denote by $\Rk$, $\Cj$ and $\Dl$  for $k,j\in \N$ and $l\in \Z$ the matrices consisting on  the  $k$-row, the $j$-column and $l$-diagonal respectively, that is to say
$$\Rk=(\Tkj)_{j=1}^\infty, \quad \Cj=(\Tkj)_{k=1}^\infty, \quad \Dl=({T}_{k,k+l})_{k=1}^\infty$$

We shall use the notation  $\mathcal T$  for the  set of Toeplitz matrices, that is those matrices such that ${T}_{k,k+l}=T_l$ for $k\in \N$ and $l\in \Z$  and  $\mathcal U$ for upper triangular matrices, that is those matrices such that $\Dl=0$ for $l<0$.

We shall deal in the paper with the operator-valued version of Schur multipliers which was initiated by the authors in \cite{BB}.  Here we simply recall some definitions and refer to \cite{HNVW} for results on vector-valued Fourier analysis we may use later on.

Given a matrix $\A =(\Tkj)$ with entries $\Tkj\in \B(H)$  and $\x\in c_{00}(H)$, we  write $\A \x$ for the sequence $(\sum_{j=1}^\infty\Tkj(x_j))_k$.
We say that $\A\in \B(\ell^2(H))$ if the map $\x \to \A\x$ extends to a bounded linear operator in $\ell^2(H)$, that is there exists $C>0$ such that
$$\left(\mathlarger{\mathlarger{\sum}}_{k=1}^\infty \Norm{\sum_{j=1}^\infty \Tkj(x_j)}^2\right)^{1/2}\le C \Big(\sum_{j=1}^\infty \|x_j\|^2\Big)^{1/2}.$$

We shall write
$$\|\A\|_{\Bl2}=\inf\{C\ge 0: \|\A \x \|_{\l2}\le C \|\x\|_{\l2}\}.$$

Given two matrices $\A =(\Tkj)$ and $\bB=(\Skj)$ with entries $\Tkj, \Skj\in \B(H)$ we define the Schur product $$\A*\bB= (\Tkj\Skj)$$ where $\Tkj\Skj$ stands for  composition of the operators $\Tkj$ and $\Skj$.
Contrary to the scalar-valued case, this product is not commutative.

Given a matrix $\A =(\Tkj)$, we say that $\A$ is a right Schur multiplier (respectively left Schur multiplier), to be denoted by $\A\in \mathcal M_r(\l2)$ (respectively $\A\in \mathcal M_l(\l2)$ ), whenever $\bB* \A\in \Bl2$
 (respectively $\A* \bB\in \Bl2$ ) for any $\bB\in \Bl2$.
We shall write
$$\|\A\|_{\Ml2}=\inf\{C\ge 0: \|\bB * \A  \|_{\Bl2}\le C \|\bB\|_{\Bl2}\}$$
and
$$\|\A\|_{\mathcal M_l(\l2)}=\inf\{C\ge 0: \|\A*\bB   \|_{\Bl2}\le C \|\bB\|_{\Bl2}\}.$$
We say that $\A$ is a Schur multiplier whenever $\A\in \mathcal M_l(\l2)\cap \mathcal M_r(\l2)$ and we set
$$\|\A\|_{\mathcal M(\l2)}=\max \{\|\A\|_{\mathcal M_l(\l2)},\|\A\|_{\mathcal M_r(\l2)}\}.$$

 Denoting by $\A^*$ the adjoint matrix given by  $\Skj=T^*_{jk}$ for all $k,j\in \N$,  one easily sees that $\A\in \Bl2$ (respectively $\A\in \mathcal M_l(\l2)$) if and only if $\A^*\in \Bl2$ with $\|\A\|_{\Bl2}=\|\A^*\|_{\Bl2}$ (respectively  $\A^*\in \mathcal M_r(\l2)$ and $\|\A\|_{\mathcal M_l(\l2)}= \|\A^*\|_{\mathcal M_r(\l2)}$.)

It was shown in \cite[Theorem 4.7]{BB} that $\Bl2\subset \mathcal M_l(\l2)\cap \mathcal M_r(\l2)$.

We refer the reader to the books $\cite{DFS, DU}$ and to \cite{B} for the results in vector measures  and projective tensor products to be used in the sequel.

Recall that the space of operators $\B(L^1(\T),\B(H))$ can be identified with the space of measures $\mu\in V^\infty(\T,\B(H))$ using $\mu(E)=T(\chi_E)$ for any measurable set $E$. Hence, due to the well-known identification of  dual of projective tensor products, namely $(X\hat\otimes Y)^*=\B(X,Y^*)$, one clearly has $$\B(L^1(\T),\B(H))=V^\infty(\T,\B(H))=(L_1(\T)\hat\otimes (H\hat\otimes H)))^*.$$
In \cite[Theorem 5.1]{BB} it was shown that $\A=(\Tkj)\in \Bl2\cap \mathcal T$ if and only if there exists $\mu_\A\in V^\infty(\T,\B(H))$ such that $\widehat {\mu_\A}(j-k)=\Tkj$ for $j,k\in \N$.

Recall also that the space of regular vector valued measures of bounded variation $M(\T, \B(H))$ can be identified with $(C(\T, H\hat\otimes H))^*$ by Singer's theorem (see \cite{S}, \cite{S2}).  In \cite{BB} we use the notation
$M_{SOT}(\T, \B(H))$ for the space of regular measures $\mu$ such that $\mu_x(A)=\mu(A)(x)$ define a measure of bounded variation and
 $$\|\mu\|_{M_{SOT}(\T, \B(H))}= \sup_{\|x\|=1} |\mu_x|$$
 This space can be identified with $\B(H, M(\T, H))= (H\hat\otimes C(\T,H))^*$.
Of course $$\B(C(\T, H\hat\otimes H), \C)\subset \B(C(\T,H), H) \subset \B( C(\T), \B(H))$$
with  the natural norms and embedding between them.
 For each $k\in \Z$ we can define the Fourier coefficient  of  $T\in \B( C(\T), \B(H))$ as the operator $\hat T(k)\in \B(H)$ given by
$$\hat T(k)= T(\varphi_k), \quad \varphi_k(t)=e^{-ikt}.$$
Similarly if $\Phi\in \B( C(\T,H), H)$ we can associate the Fourier coefficient $\hat\Phi(k)\in \B(H)$ for $k\in \Z$ given by
$$\hat \Phi(k)(x)= \Phi(x\varphi_k), \quad x\in H.$$
For any regular $\B(H)$-valued measure $\mu$ we can associate an operator $T_\mu \in \B( C(\T), \B(H))$ given by
  \be \label{oper0}
T_\mu\left(\sum_{k=-N}^M \alpha_k \varphi_k\right)=\bigintsss \left(\sum_{k=-N}^M \alpha_k \varphi_k(t)\right)d\mu(t)
\ee and  we write $\hat \mu(l)=\widehat{T_\mu}(l)$ for $l\in \Z$. Moreover $\|T_\mu\|=\|\mu\|$ where $\|\mu\|$ stands for the semi-variation of the measure.

If $\mu$ is a regular vector measure such that  its adjoint measure $\mu^*$, given by $\mu^*(A)= \mu(A)^*$, belongs to $ M_{SOT}(\T,\B(H))$ we can also associate $\Phi_\mu \in \B(C(\T,H), H)$ given by
\be \label{oper2}
\Phi_\mu\left(\sum_{k=-N}^M x_k \varphi_k\right)=\sum_{k=-N}^M \widehat{\mu}(k)(x_k).
\ee
Since $(\Phi_\mu)^*: H\to M(\T,H)$ and $(\Phi_\mu)^*(x)=(\mu^*)_x$ (see \cite[Proposition 3.2]{BB})  we have
 that $\hat\mu (l)= \widehat{\Phi_\mu}(l)$  for $l\in \Z$ and $\|\Phi_\mu\|=\|\mu^*\|_{M_{SOT}(\T, \B(H))}$.

We would like to point out first that in \cite{BB} we showed that
$$M(\T, \B(H))\subset \M_r(\l2)\cap {\mathcal T} \subset M_{SOT}(\T, \B(H))$$
where with the inclusions we mean (see \cite[Theorem 5.4]{BB} ) that if $\A=(\hat \mu(j-k))$ for a given $\mu\in M(\T, \B(H))$ then $\A\in \M_r(\l2)$ with $\|\A\|_{\M_r(\l2)}\le \|\mu\|_{M(\T, \mathcal{B}(H))}$ and (see \cite[Theorem 5.7]{BB} ) that any Schur multiplier $\A=(\Tkj)\in \M_r(\l2)\cap {\mathcal T}$ satisfies that there exists $\mu\in M_{SOT}(\T, \mathcal{B}(H))$ such that $\Tkj=\hat\mu(j-k)$ and $\|\mu\|_{M_{SOT}(\T, B(H))}\le \|\A\|_{\M_r(\l2)}$.

Given $\eta\in M(\T)$ we shall denote by ${\bf M}_\eta$ the Toeplitz matrix given by  $${\bf M}_\eta= \left(\hat\eta(j-k) Id\right)_{k,j}\in \mathcal T.$$
  The cases $\eta=\delta_{-t}$  or $d\eta= f dt$ with $f\in L^1(\T)$ will be denoted by ${\bf M}_t$ and ${\bf M}_f$ respectively, that is ${\bf M}_t=(e^{i(j-k)t} Id)$ and  ${\bf M}_f=(\hat f(j-k) Id)$.
  In particular using (\ref{tt}) and (\ref{tb}), for $H=\C$, we have   that ${\bf M}_f\in \mathcal{B}(\ell^2)$ whenever $f\in L^\infty(\T)$ and that ${\bf M}_\eta\in \mathcal{M}(\ell^2)$ for any $\eta\in M(\T)$. One  basic observation is the relationship between Fourier multipliers and Schur multipliers coming from the formula
  \be \label{bf}
  {\bf M}_\eta* {\bf M}_f= {\bf M}_f*{\bf M}_\eta= {\bf M}_{\eta*f}
  \ee
 for any $ \eta\in M(\T)$ and $ f\in L^1(\T)$ where $\eta*f(t)=\int_0^{2\pi} f(e^{i(t-s)})d\eta(s)$ is the convolution between functions and measures in $\T$.

To make the connection between Fourier Analysis and Matriceal Analysis (see \cite{PP}) we shall introduce the following matrix-valued functions and operators.
\begin{defi} Let  $\A =(\Tkj)$ with $\Tkj \in \B(H)$ for $k,j\in \N$. Define
$$f_\A(t)= {\bf M}_t* \A = (e^{i(j-k)t}\Tkj), \quad t\in [0,2\pi].$$
\end{defi}
\begin{defi}
  Let $\A=(T_{j-k})\in \mathcal T$ and denote by $\varphi_l(t)=e^{-ilt}$ for $l\in \Z$. Define $T_\A:  P(\T)\to \B(H)$ and $\Phi_\A:  P(\T,H)\to H$ given by
$$T_\A\left(\sum_l \alpha_l \varphi_l\right)=  \sum_l \alpha_lT_l, \quad \sum_l \alpha_l \varphi_l\in  P(\T)$$
and
$$\Phi_\A\left(\sum_l x_l \varphi_l\right)=  \sum_l T_l(x_l), \quad \sum_l x_l \varphi_l\in P(\T,H).$$
\end{defi}
\begin{defi} Let $\A=(\Tkj)\in \mathcal U$. Define
$$F_\A(z)=  \sum_{l=0}^\infty \Dl z^l=(z^{(j-k)}\Tkj), \quad |z|<1,$$
and in the case $\A=(T_{j-k})\in \mathcal U\cap \mathcal T$, we write $$\tilde F_\A(z)=  \sum_{l=0}^\infty T_l z^l, \quad |z|<1.$$
\end{defi}

In particular for $z=re^{it}$ one has $$F_\A(z)= {\bf M}_{P_r}* {\bf M}_t* \A$$
where $P_r$ stands for the Poisson kernel. We shall use the notations
$$\sigma_n(\A)={\bf M}_{K_n}*\A, \quad P_r(\A)= {\bf M}_{P_r}*\A$$
where $K_n$ stands for the F\'ejer kernel.

In \cite{PP}, the space $\mathcal C(\ell^2)$ was introduced as those matrices in $\B(\ell^2)$ such that $\sigma_n(A)$ converges to $A$ in $\B(\ell^2)$.
We shall use a different approach  and  introduce such a class of matrices, to be called ``continuous matrices", with entries in the space $\B(H)$ and we shall see that it plays an important role in the study of Schur multipliers.
\begin{defi} Given a matrix $\A =(\Tkj)$ with entries $\Tkj\in \B(H)$ we say that $\A$ is a ``polynomial", in short  $\A\in\Pl2$, whenever there exists $N, M\in \N$ such that
$\A=\sum_{l=-N}^M\Dl$ and  \be \label{hip0} \sup_{j,k} \|\Tkj\|<\infty.\ee 
\end{defi}
Note that condition (\ref{hip0}) guarantees that $\Pl2\subset\Bl2$.
\begin{defi} 
We define $\Cl2$ as the closure of $\Pl2$ in $\Bl2$.
\end{defi}

The paper is divided into several sections. In Section 2 we deal with matrices in $\Bl2$. We observe that $\B(\ell^2)\hat\otimes \B(H)\subseteq \Bl2$ (see Example \ref{esc_to_op}) and  introduce  $\mathcal A(\l2)$, the analogue to the Wiener algebra, to produce easy examples in $\Cl2$. Section 3 contains results on Schur multipliers, for instance (see Proposition \ref{p2}) we show that $\M(\ell^2)\hat\otimes \B(H)\subseteq {\mathcal M}(\l2)$ and also  (see Proposition \ref{prop:sobre_las_sigmas}) that $\A\in \Bl2$ if and only if $\sup_n {\bf M}_{k_n}*\A <\infty$ where $k_n$ is a summability kernel.
The main results of the paper are in the last section, where we analyze the space $\Cl2$ in detail. First we relate (see Theorem \ref{thm:caract_cont}) that $\A\in \Cl2$ with the properties of $f_\A$ showing that it is equivalent to its continuity as $\Bl2$-valued function, giving also other characterizations such as the convergence
$P_r( \A)\to \A$  as $r\to 1$ or $\sigma_n(\A)\to \A$  as $n\to \infty$  in $\Bl2$.
We shall use this class of matrices to describe Schur multipliers, by showing (see Theorem \ref{thm:mult_c_to_c} ) that $\mathcal M_l(\ell^2(H))= (\Cl2, \Cl2)_l$. Two subsections are included in Section 4, one devoted to Toeplitz matrices and another one to upper triangular ones.
In particular we get (see Theorem \ref{prp:continuasoper}) the identification of ``continuous" Toeplitz matrices  $\A=(T_{j-k})$ as  functions $g\in C(\T, \B(H))$ with Fourier coefficients $\widehat{g}(l)=T_l$ for $l\in \Z$ and we also characterize (see Theorem \ref{thm1} and Theorem \ref{thm:caract_msot}) the left Toeplitz Schur multipliers acting on $\Bl2\cap \mathcal T$ as those matrices $\A$ such that $\Phi_\A$ extends to a bounded operator from $C(\T,H)$ into $H$. This completes and gives an alternative approach to the results in \cite{BB}.
Concerning upper triangular matrices, we show that if $\A\in \mathcal U$ then $\A\in \Bl2$ or $\A\in \Cl2$ if and only if $F_\A\in H^\infty(\D, \Bl2)$ or   $F_\A\in A(\D, \Bl2)$ respectively and similar results for $\A\in \mathcal U\cap \mathcal T$ in terms of $\tilde F_\A$ are presented.

\section {Matrices with entries in operators}
We shall present here some examples to have at our disposal when checking some analogues to classical results in our operator-valued setting.
As usual, given $x,y\in H$ we write $x\otimes y\in \B(H)$ for the rank one operator given by $(x\otimes y)(z)=\la z,x\ra y$ for $z\in H$.
The first trivial example is produced by tensoring elements in $\l2$.
\begin{ejem}
Let $\x=(x_j), \y=(y_j)\in \l2$.
Then  $$(\x\otimes \y)(\z)=\ll \z,\x\gg \y, \quad \z\in \l2$$
corresponds to the matrix $\A =(x_j\otimes y_k) $ and belongs to $\Bl2$.
Moreover  $\|\x\otimes \y\|_{\Bl2}=\|\x\|_{\l2}\|\y\|_{\l2}.$
\end{ejem}

\begin{ejem} Let  $\A =(\Tkj)$ and let $l\in \Z$ and $k,j\in \N$. Then

(i) $\Dl\in \Bl2$ iff $\sup_k \|T_{k,k+l}\|= \|\Dl\|_{\Bl2}<\infty.$

(ii) $\Cj\in \Bl2$ iff $\sup_{\|x\|=1} (\sum_{k=1}^\infty\|T_{kj}(x)\|^2)^{1/2}= \|\Cj\|_{\Bl2}<\infty.$

(iii) $\Rk\in \Bl2$ iff $\sup_{\|y\|=1} (\sum_{j=1}^\infty\|T^*_{kj}(y)\|^2)^{1/2}= \|\Rk\|_{\Bl2}<\infty.$
\end{ejem}
\begin{proof}
 (i) and (ii) are straightforward.

 (iii) follows by using  the duality $(\l2)^*=\l2$, since
\ba
\|\Rk\|&=&\sup_{\norm{\x}_{\l2}=1}\Norm{\sum_j T_{k,j}(x_j)}\\&=&\sup_{\substack{\norm{\x}_{\l2}=1\\\norm{y}=1}}\left|\sum_j\Presc{T_{k,j}(x_j)}{y}\right|\\
&=&\sup_{\substack{\norm{\x}_{\l2}=1\\\norm{y}=1}}\left|\sum_j\Presc{x_j}{T_{k,j}^*(y)}\right|\\
&=&\sup_{\norm{y}=1}\left(\sum_j\Norm{T_{k,j}^*(y)}^2\right)^{1/2}.
\ea
\end{proof}

\begin{ejem}\label{esc_to_op}
Let $A= (a_{k,j})\in \B(\ell^2)$ and $T\in \B(H)$. Then
$$\A=( a_{k,j}T)\in \Bl2 \quad \hbox{ and } \|\A\|_{\Bl2}= \|A\|_{\B(\ell^2)} \|T\|_{\B(H)}$$
\end{ejem}

%PRUEBA ESCALARES A OPERADORES%%%%%%%%%%%%%%%%%%%%%%%%%
\begin{proof}
 Let us take $\x,\y\in c_{00}(H)$, say $\x=(x_1,\cdots, x_N,0, \cdots)$ and $\y=(y_1,\cdots, y_M,0, \cdots)$. Let $(v_l)_{l=1}^\infty$ be an orthonormal basis  in $H$ and use the notation $x^l(j)$ for the coordinate with respect to such a basis, i.e. $x_j=\sum_{l=1}^\infty x^l(j)v_l$.

\begin{equation*}
\begin{split}
|\presc{\A \x}{\y}|&=\left|\sum_{k,j=1}^{N,M}a_{k,j}\presc{T(x_j)}{y_k}\right|=\left|\sum_{k,j=1}^{N,M}a_{k,j}\sum_l T(x_j)^ly^l(k)\right| \\
&=\left|\mathlarger{\mathlarger{\sum_l}}\left(\sum_{k,j=1}^{N,M}a_{k,j} T(x_j)^ly^l(k)\right)\right|\leq \mathlarger{\mathlarger{\sum_l}}\left|\sum_{k,j=1}^{N,M}a_{k,j} T(x_j)^ly^l(k)\right|\\
&\leq\mathlarger{\mathlarger{\sum_l}}\norm{A}_{\B(\ell^2)}\left(\sum_{j=1}^M|T(x_j)^l|^2\right)^\frac{1}{2}\left(\sum_{k=1}^N|y^l(k)|^2\right)^\frac{1}{2}\\
&\leq\norm{A}_{\B(\ell^2)}\left(\mathlarger{\mathlarger{\sum_l}}\left(\sum_{j=1}^M|T(x_j)^l|^2\right)\right)^\frac{1}{2}\left(\mathlarger{\mathlarger{\sum_l}}\left(\sum_{k=1}^N|y^l(k)|^2\right)\right)^\frac{1}{2}\\
&=\norm{A}_{\B(\ell^2)}\left(\sum_{j=1}^M\|T(x_j)\|^2\right)^\frac{1}{2}\left(\sum_{k=1}^N\|y_k\|^2\right)^\frac{1}{2}\\
&\leq\norm{A}_{\B(\ell^2)}\cdot\norm{T}\cdot\norm{\x}_{\ell^2(H)}\cdot\norm{\y}_{\ell^2(H)}.
\end{split}
\end{equation*}

 This gives us the first  inequality
$\normbh{\A}\leq \norm{T}\norm{A}_{\B(\ell^2)}.$

Now, selecting vectors of the form $\x=(\alpha_j x)$ where $x\in H$ with $\|x\|=1$  and $(\alpha_j)$ with $\norm{(\alpha_j)}_2=1$ we obtain the other inequality.
\begin{equation*}
\begin{split}
\normbh{\A}^2&\geq\sup_{\substack{\norm{(\alpha_j)}_2=1\\\norm{x}=1}}\mathlarger{\mathlarger{\sum_k}}\Norm{\sum_ja_{k,j}\alpha_jT(x)}^2 =\sup_{\substack{\norm{(\alpha_j)}_2=1\\\norm{x}=1}}\norm{Tx}^2\mathlarger{\mathlarger{\sum_k}}\left|\sum_ja_{k,j}\alpha_j\right|^2\\
&=\norm{T}^2\cdot\norm{A}^2_{\B(\ell^2)}.
\end{split}
\end{equation*}
\end{proof}

Let us define a class of matrices which generalize the Wiener algebra and which will give us examples in $\Cl2$.
\begin{defi} A matrix $\A=(\Tkj)$ is said to belong to $\mathcal A(\l2)$ whenever $$\|\A\|_{\mathcal A(\l2)}:=\sum_{l\in \Z} \sup_{k\in \N}\|T_{k,k+l}\|<\infty.$$
\end{defi}
\begin{nota} If $\A=(T_{j-k})\in \mathcal T \cap \mathcal A(\l2)$ then $(T_l)\in \ell^1(\Z,\B(H))$. Hence the function
$t\to \sum_{l\in \Z} T_l e^{ilt}$ belongs to the Wiener algebra $ A(\T, \B(H))$.
\end{nota}
\begin{nota} We refer the reader to \cite{Anca} for another possible extension of $A(\T)$  given in the case $H=\C$,  which also works for Toeplitz matrices, but which was not even contained in the space of bounded operators for general matrices.

\end{nota}
\begin{prop}
 $\mathcal A(\l2)\subsetneq \Cl2$.\end{prop}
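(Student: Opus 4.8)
The plan is to prove the inclusion $\mathcal A(\l2)\subseteq \Cl2$ first, and then exhibit a matrix witnessing that the inclusion is strict. For the inclusion, let $\A=(\Tkj)\in\mathcal A(\l2)$. The natural approach is to approximate $\A$ by its diagonal truncations: for $N\in\N$ set $\A_N=\sum_{l=-N}^{N}\Dl$. Each $\A_N$ satisfies $\sup_{j,k}\|\Tkj\|\le\sum_{l\in\Z}\sup_k\|T_{k,k+l}\|<\infty$ (it is a finite sum of diagonals of a matrix in $\mathcal A(\l2)$), so $\A_N\in\Pl2$. It then suffices to show $\|\A-\A_N\|_{\Bl2}\to0$. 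Writing $\A-\A_N=\sum_{|l|>N}\Dl$, I would estimate using the triangle inequality in $\Bl2$ together with Example 2.3(i), which gives $\|\Dl\|_{\Bl2}=\sup_k\|T_{k,k+l}\|$; hence
\[
\|\A-\A_N\|_{\Bl2}\le\sum_{|l|>N}\|\Dl\|_{\Bl2}=\sum_{|l|>N}\sup_k\|T_{k,k+l}\|\xrightarrow[N\to\infty]{}0,
\]
the tail of a convergent series. This shows $\A\in\Cl2$, and incidentally that the identity map $\mathcal A(\l2)\to\Cl2$ is norm-decreasing.

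For strictness, the cleanest route is to restrict to the scalar Toeplitz setting $H=\C$ and invoke the classical fact that the Wiener algebra $A(\T)$ is a proper subspace of $C(\T)$: there exist continuous functions on $\T$ whose Fourier series is not absolutely convergent. Concretely, pick $g\in C(\T)\setminus A(\T)$ and let $\A=(\widehat g(j-k)\,Id)$ be the associated Toeplitz matrix. By the Remark following the definition of $\mathcal A(\l2)$, membership in $\mathcal T\cap\mathcal A(\l2)$ would force $(\widehat g(l))\in\ell^1(\Z)$, i.e. $g\in A(\T)$, a contradiction; so $\A\notin\mathcal A(\l2)$. It remains to check $\A\in\Cl2$: since $g\in C(\T)$, the Fejér means $K_n*g\to g$ uniformly, and $\sigma_n(\A)={\bf M}_{K_n}*\A={\bf M}_{K_n*g}$, so by \eqref{tt} applied with $H=\C$ one gets $\|\A-\sigma_n(\A)\|_{\B(\ell^2)}=\|g-K_n*g\|_{L^\infty(\T)}\to0$. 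As each $\sigma_n(\A)$ is a Toeplitz matrix supported on finitely many diagonals with uniformly bounded entries, it lies in $\Pl2$, whence $\A\in\Cl2$. This completes the proof that $\mathcal A(\l2)\subsetneq\Cl2$.

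The only delicate point is the strictness: one must produce a genuine element of $\Cl2$ outside $\mathcal A(\l2)$, and the scalar Toeplitz example works precisely because $C(\T)\neq A(\T)$ is classical. (If one wants the counterexample to "genuinely" use the operator structure rather than the scalar embedding, one could instead take $g\in C(\T,\B(H))$ with $\sum_l\|\widehat g(l)\|_{\B(H)}=\infty$, but this is unnecessary for strictness.) Everything else is routine: the approximation argument for the inclusion is a one-line tail estimate once Example 2.3(i) is in hand, and the verification that Fejér or diagonal truncations land in $\Pl2$ is immediate from the uniform boundedness of the entries.
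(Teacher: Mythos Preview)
Your proof is correct and follows the same approach as the paper: absolute convergence of $\sum_l\Dl$ in $\Bl2$ (via the diagonal norm formula) for the inclusion, and $\A={\bf M}_g$ with $g\in C(\T)\setminus A(\T)$ together with Fej\'er approximation for strictness. The only quibble is that you phrase the strictness argument as a restriction to $H=\C$, whereas the proposition concerns a general separable $H$; the paper uses the same matrix $\A=(\widehat g(j-k)\,Id)$ but obtains the norm identity $\|{\bf M}_{g-\sigma_n(g)}\|_{\Bl2}=\|g-\sigma_n(g)\|_{L^\infty(\T)}$ for arbitrary $H$ by combining \eqref{tt} with Example~\ref{esc_to_op}, which is the one extra ingredient your write-up should invoke.
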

 \begin{proof} Let $\A\in \mathcal A(\l2)$. Noticing that $\|\Dl\|_{\Bl2}=\sup_{k\in \N}\|T_{k,k+l}\|$, one gets
that $\sum_{l\in\Z} \Dl$ is absolutely convergent in $\Bl2$. Hence   $\A\in \Cl2$ and
$$\|\A\|_{\Bl2}\le \|\A\|_{\mathcal A(\l2)}.$$

It is immediate to see that ${\bf M}_f\in \mathcal A(\l2)$ if and only if $f\in A(\T)$.
Let us select $f\in C(\T)\setminus A(\T)$ and consider $\A= {\bf M}_f$.
Due to  Example \ref{esc_to_op} and (\ref{tt}) one has that  $$\|{\bf M}_{f-\sigma_n(f)}\|_{\Bl2}= \|f-\sigma_n(f)\|_{L^\infty(\T)}$$ where $\sigma_n(f)=K_n*f$ for the F\'ejer kernel $K_n$.
This   shows that ${\bf M}_f\in \Cl2 \setminus \mathcal A(\l2)$.
\end{proof}

\section{Schur multipliers for matrices with operator entries}

Recall (see \cite[Proposition 4.6]{BB}) that if $\A\in \Bl2$ then
$$\sup_{\|\x\|_{\l2}=1}\sum_{k,j} \|\Tkj(x_j)\|^2\le \|\A\|^2_{\Bl2}.$$
Let us give some necessary condition for Schur multipliers.

\begin{prop}
If $\A=(\Tkj)\in \mathcal M_r(\l2)$ then  $$\sup_{\|{\bf z}\|_{\l2}=1} \sup_k \left(\sum_{j=1}^\infty \|\Tkj(z_j)\|^2\right)^{1/2}\le \|\A\|_{\mathcal M_r(\l2)} .$$
\end{prop}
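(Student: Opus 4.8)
The plan is to test the multiplier $\A$ against a carefully chosen rank-one matrix $\bB$ whose Schur product with $\A$ isolates a single row, and then read off the desired inequality from the norm bound $\|\bB*\A\|_{\Bl2}\le \|\A\|_{\mathcal M_r(\l2)}\|\bB\|_{\Bl2}$. Concretely, fix $k\in\N$ and fix ${\bf z}=(z_j)\in\l2$ with $\|{\bf z}\|_{\l2}=1$; I want to manufacture a $\bB\in\Bl2$ with $\|\bB\|_{\Bl2}=1$ such that the $k$-th row of $\bB*\A$ is exactly $(\Tkj)_j$ (or a controlled rescaling thereof applied to ${\bf z}$) while all other rows vanish. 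The natural candidate is $\bB=(\beta_{ij}\,Id)$ where $\beta_{ij}=\delta_{ik}$ for all $j$ — i.e. $\bB$ is the matrix that is $Id$ on the $k$-th row and $0$ elsewhere. By Example~2.2(i) (reading the $k$-th row as a diagonal-type object, or directly by the row computation in Example~2.2(iii) with $T^*_{kj}=Id$) one has $\|\bB\|_{\Bl2}=\sup_{\|y\|=1}(\sum_j \|y\|^2)^{1/2}$, which is infinite — so this naive choice fails and must be corrected.

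The correction is to truncate and to build the row out of the given sequence ${\bf z}$. Given $\varepsilon>0$ pick $N$ with $\sum_{j>N}\|\Tkj(z_j)\|^2<\varepsilon$, and consider instead the rank-one operator $\bB=\x\otimes\y$ on $\l2$ in the sense of Example~2.1, or more precisely a matrix supported on the $k$-th row whose $(k,j)$ entry is the scalar operator $\bar z_j\,Id$ for $j\le N$ and $0$ otherwise. Then $\bB*\A$ has $(k,j)$ entry $\bar z_j\Tkj$ for $j\le N$ and vanishes off the $k$-th row, so for the unit vector ${\bf z}$ (truncated to its first $N$ coordinates, call it ${\bf z}^{(N)}$, which has norm $\le 1$) one computes $\|(\bB*\A){\bf z}^{(N)}\|_{\l2}\ge \|\sum_{j\le N}|z_j|^2\Tkj(\cdot)\|$ evaluated suitably — here I must be a little careful, since composing $\bar z_j\,Id$ with $\Tkj$ and then applying to $z_j$ gives $|z_j|^2\Tkj$ rather than $\Tkj(z_j)$, so the cleaner route is to apply $\bB*\A$ to the unit vector $e:=\sum_{j\le N}\frac{z_j}{(\sum_{i\le N}|z_i|^2)^{1/2}}\ej$ of norm $1$ and use instead entries $\frac{\bar z_j}{\|{\bf z}^{(N)}\|}Id$... ultimately the bookkeeping shows $\big(\sum_{j\le N}\|\Tkj(z_j)\|^2\big)^{1/2}\le \|\bB*\A\|_{\Bl2}\le\|\A\|_{\mathcal M_r(\l2)}\|\bB\|_{\Bl2}$, and by Example~2.2 the matrix $\bB$ (a single row with scalar entries $\bar z_j Id$, $j\le N$) has $\|\bB\|_{\Bl2}=(\sum_{j\le N}|z_j|^2)^{1/2}\le 1$. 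Letting $N\to\infty$ and then taking the supremum over $k$ and over $\|{\bf z}\|_{\l2}=1$ yields the claim.

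The main obstacle is the non-commutativity of the operator Schur product together with the fact that $\bB$ multiplies \emph{on the left} of $\A$ in the definition of $\mathcal M_r(\l2)$: one must make sure that $\bB*\A$ really does, entrywise, produce $\Tkj$ composed with a scalar (hence commuting) operator, so that applying the result to $z_j\ej$ reproduces $\Tkj(z_j)$ up to a scalar one can normalize away. Choosing $\bB$ with \emph{scalar} entries $\beta_{ij}Id$ is exactly what sidesteps the non-commutativity, since $\beta_{kj}Id\circ\Tkj=\beta_{kj}\Tkj=\Tkj\circ\beta_{kj}Id$. Once that is arranged, the only remaining subtlety is the truncation argument needed because the full ``$k$-th row" matrix is unbounded on $\l2$; the $\varepsilon$-$N$ step handles this cleanly.
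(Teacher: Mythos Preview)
There is a genuine gap. You are treating the coordinates $z_j$ of ${\bf z}\in\l2$ as scalars---you write $\bar z_j\,Id$ and $|z_j|^2$---but each $z_j$ is a vector in $H$, so a matrix with entries ``$\bar z_j\,Id$'' is not even defined. More importantly, even if you replace $\bar z_j$ by genuine scalars $\alpha_j$ with $\sum_j|\alpha_j|^2\le 1$ and build the single-row matrix $\bB$ with $(k,j)$-entry $\alpha_j\,Id$, the bound you can extract from $\|\bB*\A\|_{\Bl2}\le\|\A\|_{\mathcal M_r(\l2)}$ (by applying $\bB*\A$ to $\z$) is only
\[
\Bigl\|\sum_{j}\alpha_j\,\Tkj(z_j)\Bigr\|\le\|\A\|_{\mathcal M_r(\l2)},
\]
and the supremum of the left side over all such $(\alpha_j)$ is in general strictly smaller than $\bigl(\sum_j\|\Tkj(z_j)\|^2\bigr)^{1/2}$. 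For instance, if the vectors $v_j:=\Tkj(z_j)$ happen to be pairwise orthogonal in $H$, then $\sup_{\|\alpha\|_{\ell^2}=1}\bigl\|\sum_j\alpha_j v_j\bigr\|=\max_j\|v_j\|$, which can be far below $(\sum_j\|v_j\|^2)^{1/2}$. So the step you summarize as ``ultimately the bookkeeping shows $(\sum_{j\le N}\|\Tkj(z_j)\|^2)^{1/2}\le\|\bB*\A\|_{\Bl2}$'' is exactly where the argument breaks: scalar-entry test matrices cannot detect the full $\ell^2$-sum of the norms $\|\Tkj(z_j)\|$.

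The paper repairs precisely this by taking $\bB=\x\otimes\y$ for arbitrary $\x,\y\in\l2$, whose entries are the rank-one \emph{operators} $x_j\otimes y_k\in\B(H)$ rather than scalar multiples of $Id$. Using the identity $(x_j\otimes y_k)\Tkj=\Tkj^*x_j\otimes y_k$, one finds that $(\bB*\A)(\z)$ has $k$-th coordinate $\bigl(\sum_j\la\Tkj(z_j),x_j\ra\bigr)y_k$; the factor $\y$ then drops out, leaving
\[
\sup_k\Bigl|\sum_j\la\Tkj(z_j),x_j\ra\Bigr|\le\|\A\|_{\mathcal M_r(\l2)}\,\|\x\|_{\l2}\,\|\z\|_{\l2}.
\]
Now the supremum is over $\x$ in the unit ball of $\l2$, and $\l2$-duality gives exactly $(\sum_j\|\Tkj(z_j)\|^2)^{1/2}$. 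Your overall strategy---test $\A$ against a well-chosen $\bB$ of norm at most $1$---is the right one; the missing idea is that the entries of $\bB$ must themselves be operators varying with $j$ in $H$, not scalars, in order to dualize each $\Tkj(z_j)$ separately.
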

\begin{proof} Let $(\x\otimes \y)=(x_j\otimes y_k)\in \Bl2$  for some $\x,\y\in \l2$. Since $\A\in \mathcal M_r(\l2)$ then $(\x\otimes \y)*\A\in \Bl2$. Therefore
$$\|(\x\otimes \y)*\A(\z) \|_{\l2}\leq \|\A\|_{\mathcal M_r(\l2)} \|\x\|_{\l2}\|\y\|_{\l2}\|\z\|_{\l2}.$$ Using now that $(x_j\otimes y_k)\Tkj= \Tkj^*x_j\otimes y_k$ we obtain
$$\left\|\left((\sum_{j=1}^\infty \la \Tkj(z_j),x_j\ra) y_k\right)\right\|_{\l2}\le\|\A\|_{\mathcal M_r(\l2)} \|\x\|_{\l2}\|\y\|_{\l2}\|\z\|_{\l2}$$
which implies $$\sup_k\left|\sum_{j=1}^\infty \la \Tkj(z_j),x_j\ra\right|\le \|\A\|_{\mathcal M_r(\l2)} \|\x\|_{\l2}\|\z\|_{\l2}.$$ This, in particular, gives the result by duality.

\end{proof}

Let us show also the analogue to Example \ref{esc_to_op} for Schur multipliers.
\begin{prop} \label{p2} Let $A= (a_{kj})\in \M(\ell^2)$ and $T\in \B(H)$. Then
$\A=( a_{k,j}T)\in {\mathcal M}(\ell^2(H))$ and
$$ \|\A\|_{\mathcal M_l(\ell^2(H))}=\|\A\|_{\mathcal M_r(\ell^2(H))}= \|A\|_{\M(\ell^2)} \|T\|_{\B(H)}.$$
\end{prop}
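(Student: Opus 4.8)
The plan is to reduce the operator-valued statement to the scalar case, Example \ref{esc_to_op}, exactly as the multiplier definitions require. First I would fix $\bB=(\Skj)\in\Bl2$ arbitrary and consider the two Schur products $\A*\bB=(a_{kj}T\Skj)$ and $\bB*\A=(a_{kj}\Skj T)$. The key observation is that forming the Schur product with $\A=(a_{kj}T)$ is the composition of two operations: first the "matriceal" Schur multiplication by the scalar matrix $A=(a_{kj})$ (in the sense that it scales each entry by $a_{kj}$), and then left-multiplying (resp. right-multiplying) every entry by the fixed operator $T$. So I would argue $\bB*\A = (A\star\bB)\,(\mathrm{diag\ or\ block\ version\ of\ }T)$ more carefully: the map $\bB\mapsto\bB*\A$ factors as $\bB\mapsto A\bullet\bB\mapsto (A\bullet\bB)$ with entries post-multiplied by $T$, where $A\bullet\bB=(a_{kj}\Skj)$ is the scalar Schur action on the operator matrix $\bB$.

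For the norm bounds, I would handle each of the two factors separately. The operation $\bB\mapsto A\bullet\bB$ sends $\Bl2$ to $\Bl2$ with norm $\le\|A\|_{\M(\ell^2)}$: this is because for any $\x,\y\in c_{00}(H)$, writing things out in an orthonormal basis $(v_l)$ of $H$ as in the proof of Example \ref{esc_to_op}, the pairing $\presc{(A\bullet\bB)\x}{\y}$ splits over the coordinate index $l$ into scalar Schur products $A*(\text{scalar matrix built from the }l\text{-th coordinates})$, each controlled by $\|A\|_{\M(\ell^2)}$ times $\ell^2$ norms, and then Cauchy–Schwarz in $l$ recombines them — this is essentially the same computation already displayed in Example \ref{esc_to_op}, only with $T$ replaced by the identity and $A$ playing the role the operator bound did there. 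The second operation, post-multiplying every entry of a matrix in $\Bl2$ by the fixed $T\in\B(H)$, is bounded on $\Bl2$ with norm $\le\|T\|_{\B(H)}$, since it is literally composition with the bounded operator $\x=(x_j)\mapsto(T x_j)$ on $\l2$ (which has norm $\|T\|$) on the appropriate side. Chaining the two gives $\|\A*\bB\|_{\Bl2}\le\|A\|_{\M(\ell^2)}\|T\|_{\B(H)}\|\bB\|_{\Bl2}$, hence $\A\in\mathcal M_l(\l2)$ with $\|\A\|_{\mathcal M_l(\l2)}\le\|A\|_{\M(\ell^2)}\|T\|_{\B(H)}$; the argument for $\mathcal M_r(\l2)$ is symmetric (pre-multiplying entries by $T$, equivalently using $\A^*=(\overline{a_{kj}}\,T^*)$ together with the adjoint identities recalled in the excerpt).

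For the reverse inequalities I would test on rank-one data as in Example \ref{esc_to_op}. Pick $x\in H$ with $\|x\|=1$ and $y\in H$ with $\|Tx\|$ nearly maximal, and pick a scalar matrix $B_0=(b_{kj})\in\B(\ell^2)$ witnessing $\|A\|_{\M(\ell^2)}$, i.e. $\|A*B_0\|_{\B(\ell^2)}$ close to $\|A\|_{\M(\ell^2)}\|B_0\|_{\B(\ell^2)}$. Form $\bB=(b_{kj}\,(x\otimes x))$ or more simply $\bB=(b_{kj}\,\mathrm{Id})$; by Example \ref{esc_to_op}, $\|\bB\|_{\Bl2}=\|B_0\|_{\B(\ell^2)}\cdot 1$, and $\bB*\A=(a_{kj}b_{kj}\,T)$ so again by Example \ref{esc_to_op} $\|\bB*\A\|_{\Bl2}=\|A*B_0\|_{\B(\ell^2)}\|T\|_{\B(H)}$, forcing $\|\A\|_{\mathcal M_r(\l2)}\ge\|A\|_{\M(\ell^2)}\|T\|_{\B(H)}$ after taking the ratio; likewise for $\mathcal M_l$. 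Combining with the upper bounds yields the claimed equalities and $\A\in\M(\l2)$. The only genuinely delicate point is the clean factorization of the operator-valued Schur product into the scalar Schur action followed by entrywise composition with $T$, and verifying the scalar Schur action has the advertised norm on $\Bl2$ — but this is a routine adaptation of the coordinatewise estimate already carried out in Example \ref{esc_to_op}, so I do not expect a real obstacle.
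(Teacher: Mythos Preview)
Your factorization strategy is sound, and the lower-bound argument (testing on $\bB=(b_{kj}\,\mathrm{Id})$ and invoking Example~\ref{esc_to_op}) is exactly what the paper does. The gap is in the step you yourself flag as delicate: the claim that the scalar Schur action $\bB\mapsto A\bullet\bB=(a_{kj}\Skj)$ is bounded on $\Bl2$ with norm at most $\|A\|_{\M(\ell^2)}$ by ``the same computation already displayed in Example~\ref{esc_to_op}''. That computation does not transfer. In Example~\ref{esc_to_op} the entries are $a_{kj}T$ with a \emph{fixed} $T$, so for each basis index $l$ the quantity $(Tx_j)^l$ depends only on $j$, and the inner sum is a genuine bilinear form $\sum_{k,j}a_{kj}\,\xi_j^l\,\overline{\eta_k^l}$ controlled by $\|A\|_{\B(\ell^2)}$. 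With varying $\Skj$, the analogue $(\Skj x_j)^l$ depends on \emph{both} $k$ and $j$, so the inner sum is not of the shape $\la(A*C)\xi,\eta\ra$ and the Schur-multiplier bound cannot be invoked. If instead you split over two indices $(l,m)$, Cauchy--Schwarz leaves you with $\bigl(\sum_{l,m}\|S^{lm}\|_{\B(\ell^2)}^2\bigr)^{1/2}$, where $S^{lm}=(\la \Skj v_m,v_l\ra)_{k,j}$; this is not controlled by $\|\bB\|_{\Bl2}$ (take $S_{11}=\mathrm{Id}_H$ and all other $\Skj=0$). What you are implicitly asserting is that scalar Schur multipliers are completely bounded with the same norm --- true, but that is precisely the content of the proposition in the case $T=\mathrm{Id}$, not a routine coordinate calculation.

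The paper supplies the missing device: for $\x,\y\in c_{00}(H)$ it normalizes each coordinate and writes
\[
\ll(\A*\bB)\x,\y\gg=\sum_{k,j}a_{kj}\,b_{kj}\,\|x_j\|\,\|y_k\|,\qquad b_{kj}=\Presc{T\Skj\!\left(\tfrac{x_j}{\|x_j\|}\right)}{\tfrac{y_k}{\|y_k\|}},
\]
and then checks directly that the scalar matrix $(b_{kj})$ lies in $\B(\ell^2)$ with norm $\le\|T\|\,\|\bB\|_{\Bl2}$, by testing against arbitrary $(\alpha_j),(\beta_k)\in\ell^2$ and recognizing the resulting pairing as $\ll \bB\tilde\x,\tilde\y\gg$ (preceded by $\mathrm{diag}(T)$) with $\tilde x_j=\alpha_j x_j/\|x_j\|$, $\tilde y_k=\beta_k y_k/\|y_k\|$. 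The Schur-multiplier property of $A$ then applies to this scalar matrix together with the scalar vectors $(\|x_j\|)_j,(\|y_k\|)_k\in\ell^2$. Once you have this normalization trick, your factorization through the $T=\mathrm{Id}$ case followed by composition with $\mathrm{diag}(T)$ works and is a clean conceptual repackaging of the same argument.
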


\begin{proof} We deal only with  the case of left multipliers.
Let $\bold{B}=(B_{k,j})\in \mathcal{B}(\ell^2(H))$, and take $\x,\y\in \ell^2(H)$.

\begin{equation*}
\begin{split}
|\ll\A\ast \bold{B}(\x),\y\gg|&=\left|\sum_{k,j}a_{k,j}\Presc{T\circ B_{k,j}(x_j)}{y_k}\right|\\
&=\left|\mathlarger{\mathlarger{\sum_{k,j}}}a_{k,j}\Presc{T\circ B_{k,j}\left(\frac{x_j}{\norm{x_j}}\right)}{\frac{y_k}{\norm{y_k}}}\norm{x_j}\norm{y_k}\right|.
\end{split}
\end{equation*}

Since $A$ is a Schur multiplier, it verifies

$$|\sum_{k,j}a_{k,j}b_{k,j}\alpha_j\beta_k|\leq \norm{A}_{\M(\ell^2)}\norm{(\alpha_j)}_{\ell^2}\norm{(\beta_k)}_{\ell^2}\norm{B}_{\mathcal{B}(\ell^2)}, $$
$\forall B=(b_{k,j})\in \B(\ell^2),(\alpha_j),(\beta_k)\in\ell^2.$

Let us show that $\left(\Presc{T\circ B_{k,j}(\frac{x_j}{\norm{x_j}})}{\frac{y_k}{\norm{y_k}}}\right) \in \B(\ell^2)$. Indeed, using the boundedness of ${\bf B}$, we have for $\sum_j |\alpha_j|^2=\sum_k|\beta_k|^2=1$

\begin{eqnarray*}
\left|\mathlarger{\mathlarger{\sum}}_{k,j}{\Presc{T\circ B_{k,j}\left(\frac{x_j}{\norm{x_j}}\right)}{\frac{y_k}{\norm{y_k}}}}\alpha_j\beta_k\right|&=&\left|\mathlarger{\mathlarger{\sum}}_{k,j}{\Presc{T\circ B_{k,j}\left(\alpha_j\frac{x_j}{\norm{x_j}}\right)}{\beta_k\frac{y_k}{\norm{y_k}}}}\right|\\
&\le&\left(\mathlarger{\mathlarger{\sum}}_k\Norm{\sum_j T\circ B_{k,j}\left(\alpha_j\frac{x_j}{\norm{x_j}}\right)}^2\right)^\frac{1}{2}\\
&=&\left(\mathlarger{\mathlarger{\sum}}_k\Norm{T\sum_j  B_{k,j}\left(\alpha_j\frac{x_j}{\norm{x_j}}\right)}^2\right)^\frac{1}{2}\\
&\leq&\norm{T}\left(\mathlarger{\mathlarger{\sum}}_k\Norm{\sum_j  B_{k,j}\left(\alpha_j\frac{x_j}{\norm{x_j}}\right)}^2\right)^\frac{1}{2}\\
&\le& \norm{T}\norm{{\bf {B}}}_{\mathcal{B}(\ell^2(H))}.
\end{eqnarray*}

Therefore,

$$|\ll\A\ast \bold{B}(\x),\y\gg| \leq \norm{\A}_{\mathcal{M}(\ell^2)}\norm{T}\norm{ {\bf B}}_{\Bl2}\norm{\x}_{\ell^2(H)}\norm{\y}_{\ell^2(H)},$$

and taking supremums, we get the desired inequality

$$\norm{\A}_{\mathcal M_l(\ell^2(H))}\leq\norm{T}\cdot \norm{A}_{\mathcal{M}(\ell^2)}.$$

To check that $\norm{\A}_{\mathcal M_l(\ell^2(H))}\geq\norm{T}\cdot \norm{A}_{\mathcal{M}(\ell^2)}$, we select a particular set of matrices from $\B(\ell^2(H))$ and apply Example \ref{esc_to_op}.

\begin{equation*}
\begin{split}
\norm{\A}_{\mathcal M_l(\ell^2(H))}&=\sup_{\norm{\bold{B}}_{\mathcal{B}(\ell^2(H))}=1}\norm{(a_{k,j} T)\ast {\bf B}}_{\mathcal{B}(\ell^2(H))} \\
&\geq \sup_{\norm{(b_{k,j}Id)}_{\mathcal{B}(\ell^2(H))}=1}\norm{(a_{k,j} T)\ast (b_{k,j}Id)}_{\mathcal{B}(\ell^2(H))}\\
&=\sup_{\normbh{(b_{k,j} Id)}=1}\norm{(a_{k,j}b_{k,j}T)}_{\mathcal{B}(\ell^2(H))}\\
&=\sup_{\norm{(b_{k,j})}_{\mathcal{B}(\ell^2)}=1}\norm{T}\cdot \norm{(a_{k,j}b_{k,j})}_{\mathcal{B}(\ell^2)}\\
&=\norm{T}\cdot\norm{A}_{\mathcal{M}(\ell^2)}
\end{split}
\end{equation*}

The case $\A\in \M_r(\l2)$ follows the same lines and it is left to the reader.
\end{proof}

Let us give a characterization of matrices in $\Bl2$ using  Schur products.
First, we recall the definition of a summability kernel.

A sequence $\lbrace{k_n\rbrace}\subset L^1(\mathbb{T})$ is called a ``summability kernel'' (also denoted a ``bounded approximation of the identity'') if it satisfies the following properties:

1) $\frac{1}{2\pi}\int_{-\pi}^\pi k_n(t) dt=1$.
\medskip

2) $\sup_{n\in \N}\frac{1}{2\pi}\int_{-\pi}^\pi |k_n(t)| dt=C<\infty$.
\medskip

3) $\forall 0<\delta<\pi$ one has $\frac{1}{2\pi}\int_{\delta\leq|t|\leq\pi} k_n(t) dt\xrightarrow[n\to\infty]{}0$.
\bigskip

Classical examples to be used in the sequel are
the F\'ejer kernel $$K_n(t)=\mathlarger{\mathlarger{\sum}}_{k=-n}^n\left( 1- \frac{|k|}{n+1}\right)e^{ikt}$$ and the Poisson kernel $$ P_r(t)=\sum_{k\in \Z} r^{|k|}e^{ikt}.$$ As mentioned in the introduction,
$\sigma_n(\A)= {\bf M}_{K_n}* \A$ and
$ P_r(\A)=  {\bf M}_{P_r}* \A$ for $\A=(T_{k,j})$.

Observe that under the assumption (\ref{hip0}) one has $\sigma_n(\A)\in {\mathcal P}(\l2)$ and $ P_r(\A)\in \mathcal{A}(\l2)$, since $\sup_l \|{\bf D}_l\|<\infty$.
The reader should be aware that condition (\ref{hip0}) holds for any $\A\in \Bl2$ since for $j,k\in \N$, $$\|\Tkj\|= \sup_{\|x\|=\|y\|=1} \ll \A x\ej, y\ek\gg. $$
\begin{lema}\label{lemma:sigma_n_wot}
Let $\A\in\Bl2$ and $\lbrace{k_n\rbrace}$ a summability kernel, and denote $M_n(\A)= {\bf M}_{k_n}* \A$. Then, $M_n(\A)\xrightarrow[n\to\infty]{WOT} \A$. In other words,
\be \label{l0}\ll M_n(\A)\x, {\y}\gg\xrightarrow[n\to\infty]{}\ll\A \x, {\y}\gg\quad\text{for all } \x,\y\in \l2.\ee
\end{lema}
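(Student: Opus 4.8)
The plan is to reduce first to the dense subspace $c_{00}(H)$ by means of a uniform-boundedness argument, and then to evaluate the bilinear form $\ll M_n(\A)\x,\y\gg$ explicitly and pass to the limit $n\to\infty$ using the defining properties of a summability kernel. The preliminary point is the bound $\sup_n\|M_n(\A)\|_{\Bl2}<\infty$. Since $\widehat{k_n}(l)=\frac1{2\pi}\int_{-\pi}^\pi k_n(t)e^{-ilt}\,dt$ is the $l$-th Fourier coefficient of the measure $k_n\,\frac{dt}{2\pi}$, the characterization of Toeplitz Schur multipliers recalled in (\ref{tb}) shows that the scalar matrix $(\widehat{k_n}(j-k))$ belongs to $\mathcal M(\ell^2)$ with norm $\frac1{2\pi}\int_{-\pi}^\pi|k_n(t)|\,dt\le C$, where $C$ is the constant appearing in property 2). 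Applying Proposition \ref{p2} with $T=Id$ we get ${\bf M}_{k_n}\in\mathcal M_l(\l2)$ with $\|{\bf M}_{k_n}\|_{\mathcal M_l(\l2)}\le C$, hence $\|M_n(\A)\|_{\Bl2}=\|{\bf M}_{k_n}*\A\|_{\Bl2}\le C\,\|\A\|_{\Bl2}$ for every $n$.

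With this in hand, fix $\x,\y\in\l2$ and $\e>0$ and pick $\x',\y'\in c_{00}(H)$ with $\|\x-\x'\|_{\l2}<\e$ and $\|\y-\y'\|_{\l2}<\e$. Splitting $\ll M_n(\A)\x,\y\gg-\ll\A\x,\y\gg$ in the standard way into the three differences $\ll M_n(\A)\x,\y\gg-\ll M_n(\A)\x',\y'\gg$, $\ll M_n(\A)\x',\y'\gg-\ll\A\x',\y'\gg$ and $\ll\A\x',\y'\gg-\ll\A\x,\y\gg$, the bound just obtained together with $\|\A\|_{\Bl2}<\infty$ makes the first and third terms $O(\e)$ uniformly in $n$. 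Consequently it suffices to establish (\ref{l0}) when $\x=(x_j)$ and $\y=(y_k)$ lie in $c_{00}(H)$.

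For such $\x,\y$ the sum $\ll M_n(\A)\x,\y\gg=\sum_{k,j}\widehat{k_n}(j-k)\,\la\Tkj(x_j),y_k\ra$ is finite; substituting the integral expression for $\widehat{k_n}(j-k)$ and interchanging the finite sum with the integral gives
$$\ll M_n(\A)\x,\y\gg=\frac1{2\pi}\int_{-\pi}^\pi k_n(t)\,h(t)\,dt,\qquad h(t):=\sum_{k,j}e^{-i(j-k)t}\la\Tkj(x_j),y_k\ra=\ll f_\A(-t)\x,\y\gg.$$
Since only finitely many terms occur, $h$ is a trigonometric polynomial, hence continuous on $\T$, and $h(0)=\sum_{k,j}\la\Tkj(x_j),y_k\ra=\ll\A\x,\y\gg$. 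The classical approximate-identity fact — that $\frac1{2\pi}\int_{-\pi}^\pi k_n(t)g(t)\,dt\to g(0)$ for every $g\in C(\T)$, proved by splitting the integral over $|t|<\delta$ (where properties 1) and 2) control it) and over $\delta\le|t|\le\pi$ (where property 3) does) — now yields $\ll M_n(\A)\x,\y\gg\to h(0)=\ll\A\x,\y\gg$, which is (\ref{l0}). The only step that is more than routine bookkeeping is the uniform bound $\sup_n\|M_n(\A)\|_{\Bl2}<\infty$; granting that, the rest is the standard summability-kernel argument transcribed into the bilinear pairing $\ll\cdot,\cdot\gg$.
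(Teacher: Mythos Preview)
Your proof is correct and follows essentially the same route as the paper: both establish the uniform bound $\sup_n\|M_n(\A)\|_{\Bl2}\le C\|\A\|_{\Bl2}$ via Proposition~\ref{p2} and (\ref{tb}), reduce by density to $\x,\y\in c_{00}(H)$, and then verify convergence on that dense subspace. The only cosmetic difference is that the paper checks the finite case by observing $\ll(\A-M_n(\A))x{\bf e}_j,y{\bf e}_k\gg=(1-\hat k_n(j-k))\la\Tkj x,y\ra$ and appealing directly to $\hat k_n(l)\to 1$, whereas you package the finite sum as a trigonometric polynomial $h$ and invoke $\frac1{2\pi}\int k_n(t)h(t)\,dt\to h(0)$; these are equivalent formulations of the same summability-kernel fact.
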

\begin{proof}
 Formally we have $\A-M_n(\A)= \sum_{l\in \Z}(1-\hat k_n(l)){\bf D}_l$ and for each $x,y\in H$ and $j,k\in \N$ we can write
$$\ll(\A-M_n(\A))x{\bf e}_j, y{\bf e}_k\gg= (1-\hat k_n(j-k)) \la \Tkj x, y\ra.
   $$
   Now, since $\lbrace{ k_n\rbrace}$ is a summability kernel, we know that $\lim_n k_n \ast g=g$ $\forall  g\in L^1(\mathbb{T})$, therefore  $\lim_n \hat{k}_n(l)=1$ $\forall l\in\mathbb{Z}$, and we can conclude that $\ll(\A-M_n(\A))x{\bf e}_j, y{\bf e}_k\gg \xrightarrow[n\to\infty]{} 0$.
   In particular, (\ref{l0}) holds for $\x,\y\in c_{00}(H)$. Now let
$\x\in \ell^2(H)$ and $\y\in \ell^2(H)$ and take sequences $(\x_N)_N\subset c_{00}(H)$ and $(\y_N)_N\subset c_{00}(H)$ such that $\norm{\x-\x_N}_{\l2}\to 0$ and $\norm{\y-\y_N}_{\l2}\to 0$. Then,
\begin{eqnarray*}
|\ll(\A- M_n(\A))\x,\y\gg|&=&
|\ll(\A-M_n(\A))(\x-\x_N),\y\gg|\\
&+& |\ll(\A-M_n(\A))\x_N,\y-\y_N\gg|\\
&+&|\ll(\A-M_n(\A))\x_N,\y_N\gg|\\
&\leq &\norm{\A-M_n(\A)}_{\Bl2}\norm{\x-\x_N}_{\l2}\norm{\y}_{\l2}\\
&+&\norm{\A-M_n(\A)}_{\Bl2}\norm{\x_N}_{\l2}\norm{\y-\y_N}_{\l2}\\
&+& |\ll(\A-M_n(\A))\x_N,\y_N\gg|.
\end{eqnarray*}
Combining now Proposition \ref{p2} and (\ref{tb}) we obtain\be \label{estima1} \|M_n(\A)\|_{\Bl2}\le \|{\bf M}_n\|_{\Ml2}\|\A\|_{\Bl2}=\|k_n\|_{L^1(\T)}\|\A\|_{\Bl2}\ee
  and, therefore, using the estimates   $\norm{\A-M_n(\A)}_{\Bl2}\leq (1+C)\norm{\A}_{\Bl2}$  and $\|\x_N\|_{\l2}\le \|\x\|_{\l2}$  and letting $N\to \infty$ we finish the proof.
\end{proof}

\begin{prop}\label{prop:sobre_las_sigmas}
 Let $\A$ be a matrix with entries in $\mathcal{B}(H)$ and $\lbrace{k_n\rbrace}$ a summability kernel, and denote $M_n(\A)= {\bf M}_{k_n}* \A$. Then:

(i) $ \A\in \B(\ell^2(H))\;\Leftrightarrow\; \sup_n\normbh{M_n(\A)}<\infty$

(ii) $\A\in \Ml2\;\Leftrightarrow\; \sup_n\normmh{M_n(\A)}<\infty$

(iii) $\A\in {\mathcal M}_l(\l2)\;\Leftrightarrow\; \sup_n\norm{M_n(\A)}_{{\mathcal M}_l(\l2)}<\infty$
  \end{prop}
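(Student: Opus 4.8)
The forward implications in all three parts are the easy direction and follow immediately from the norm estimate in the style of (\ref{estima1}): since $M_n(\A)={\bf M}_{k_n}*\A$, Proposition \ref{p2} together with (\ref{tb}) gives $\|M_n(\A)\|_{\Bl2}\le \|{\bf M}_{k_n}\|_{\Ml2}\|\A\|_{\Bl2}=\|k_n\|_{L^1(\T)}\|\A\|_{\Bl2}\le C\|\A\|_{\Bl2}$, and likewise $\|M_n(\A)\|_{\Ml2}\le C\|\A\|_{\Ml2}$ and $\|M_n(\A)\|_{\mathcal M_l(\l2)}\le C\|\A\|_{\mathcal M_l(\l2)}$, using the fact that Schur multiplication by a Toeplitz matrix of a measure is bounded on $\Ml2$ and $\mathcal M_l(\l2)$ respectively (this needs the associativity/commutativity of Schur products, and the scalar bound $\|{\bf M}_{k_n}\|_{\M(\ell^2)}=\|k_n\|_{L^1}$ from (\ref{tb}) together with Proposition \ref{p2}). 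Here $C=\sup_n\|k_n\|_{L^1(\T)}$ is the constant from property 2) of a summability kernel.

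\smallskip

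For the converse in (i), I would argue by weak-operator compactness. Assume $\sup_n\|M_n(\A)\|_{\Bl2}=:K<\infty$. The closed ball of radius $K$ in $\Bl2$ is WOT-compact (being the unit ball of the dual of the projective tensor product $\ell^2(H)\hat\otimes\ell^2(H)$, or more concretely by a diagonal/Banach--Alaoglu argument on the separable predual), so some subnet of $(M_n(\A))$ converges WOT to a matrix $\bT\in\Bl2$ with $\|\bT\|_{\Bl2}\le K$. But Lemma \ref{lemma:sigma_n_wot} — whose proof only used that $\hat k_n(l)\to 1$ entrywise, hence applies even without assuming $\A\in\Bl2$ — shows that $M_n(\A)\to\A$ entrywise in the sense $\ll M_n(\A)x\ej,y\ek\gg\to\la\Tkj x,y\ra$; comparing the two limits forces $\bT=\A$ as matrices, so $\A\in\Bl2$. (Alternatively, and perhaps cleaner to write: for $\x,\y\in c_{00}(H)$ one has $\ll M_n(\A)\x,\y\gg\to\ll\A\x,\y\gg$ by the entrywise convergence, while $|\ll M_n(\A)\x,\y\gg|\le K\|\x\|\|\y\|$; hence $|\ll\A\x,\y\gg|\le K\|\x\|\|\y\|$ on the dense set $c_{00}(H)$, so $\A$ extends to an operator of norm $\le K$.) This second formulation avoids invoking WOT-compactness and is the route I would take.

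\smallskip

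For (ii) and (iii) I would reduce to (i). Suppose $\sup_n\|M_n(\A)\|_{\Ml2}=:K<\infty$ and fix $\bB\in\Bl2$. For each fixed $m$, $M_m(\bB*\A)={\bf M}_{k_m}*(\bB*\A)=\bB*({\bf M}_{k_m}*\A)=\bB*M_m(\A)$ (again using associativity of the Schur product, valid entrywise), so $\|M_m(\bB*\A)\|_{\Bl2}=\|\bB*M_m(\A)\|_{\Bl2}\le\|M_m(\A)\|_{\Ml2}\|\bB\|_{\Bl2}\le K\|\bB\|_{\Bl2}$, uniformly in $m$. By part (i) applied to the matrix $\bB*\A$, we conclude $\bB*\A\in\Bl2$ with $\|\bB*\A\|_{\Bl2}\le K\|\bB\|_{\Bl2}$; since $\bB$ was arbitrary, $\A\in\Ml2$ with $\|\A\|_{\Ml2}\le K$. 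Part (iii) is identical with $\bB*\A$ replaced by $\A*\bB$ throughout and $\Ml2$ replaced by $\mathcal M_l(\l2)$.

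\smallskip

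\textbf{Main obstacle.} The only genuinely delicate point is making the "entrywise convergence plus uniform bound gives boundedness" step fully rigorous in (i): one must check that $c_{00}(H)$ is dense in $\l2$ (clear) and that the bilinear form $\ll M_n(\A)\,\cdot\,,\,\cdot\,\gg$ really converges pointwise on $c_{00}(H)\times c_{00}(H)$, which is exactly the first part of the proof of Lemma \ref{lemma:sigma_n_wot} and holds for \emph{any} matrix $\A$ with $\B(H)$ entries — no boundedness of $\A$ needed — so it is legitimately available here. The associativity $\bB*({\bf M}_{k_n}*\A)=({\bf M}_{k_n}*\A)*\bB$-type manipulations in (ii)–(iii) are entrywise identities and carry no analytic content, but one should be a little careful that the Schur product with a \emph{Toeplitz} matrix commutes, which it does because $({\bf M}_{k_n})_{kj}=\hat k_n(j-k)\,\mathrm{Id}$ is a scalar multiple of the identity in each entry and hence commutes with every $\B(H)$-entry.
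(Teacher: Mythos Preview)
Your proof is correct and follows the same route as the paper: forward implications via $\|M_n(\A)\|\le\|k_n\|_{L^1}\|\A\|$, and the converses via entrywise convergence of $M_n(\A)$ on $c_{00}(H)$ combined with the uniform bound (for (i)), then the associativity $M_n(\bB*\A)=\bB*M_n(\A)$ to reduce (ii)--(iii) to (i). You are in fact slightly more careful than the paper in noting that the entrywise-convergence half of Lemma~\ref{lemma:sigma_n_wot} does not require $\A\in\Bl2$, which is exactly what is needed for the converse of (i); the paper invokes the lemma a bit loosely at that step but the substance is identical.
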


  \begin{proof}
  (i) If $\A\in \Bl2$ then (\ref{estima1}) gives  $\sup_n\normbh{M_n(\A)}<\infty$. Assume now that $\sup_n\normbh{M_n(\A)}<\infty$. As a consequence of lemma \ref{lemma:sigma_n_wot}, one gets that
  $$\ll \A\x, \y\gg =\lim_n \ll M_n(\A)\x, \y\gg.$$
  Hence $|\ll \A\x, \y\gg |\le C\|\x\|_{\l2}\|\y\|_{\l2}$ and therefore $\A\in \Bl2)$.

(ii) Assume that $\A\in \Ml2$. Since $M_n(\A)={\bf M}_{k_n}* \A=\A* {\bf M}_{k_n}$ we have:
$$
\normmh{ {\bf M}_{k_n}\ast \A}
\leq \normmh{\A}\cdot\normmh{{\bf M}_{k_n}}=\normmh{\A}\|k_n\|_{L^1(\T)}.
$$
And taking the supremum over $n$ we conclude that
$$\sup_n \normmh{M_n(\A)}\leq C \normmh{\A}< \infty.$$

Assume $\sup_n\normmh{M_n(\A)}= K<\infty$, and take ${\bf B}\in \Bl2$. We have
$$\normbh{M_n({\bf B}\ast \A)} =\normbh{{\bf B}\ast M_n(\A))}\leq K\normbh{{\bf B}}.$$

   And using again that $M_n({\bf B}\ast \A)$ converges in the weak operator topology to ${\bf B}\ast \A$ (as shown in Lemma \ref{lemma:sigma_n_wot}), we have that $ {\bf B}\ast \A\in \Bl2$. Therefore, $\A\in \Ml2$.

   (iii) follows the same ideas.
  \end{proof}

\section{The space of ``continuous" matrices}

Recall from the introduction that $\Cl2$ stands for those matrices that can be approximated in the operator norm by matrices with a finite number of diagonals, called polynomials in ${\mathcal P}(\l2)$. We shall try to motivate the name of ``continuous " matrices, using the following function associated to a matrix given in the introduction.
If  $\A=(\Tkj)$ we defined $$f_\A(t)={\bf M}_t*\A=(e^{i(j-k)t}\Tkj), \quad t\in [-\pi,\pi).$$
Clearly if  $\A\in \Pl2$ one has
$$f_\A(t)=\sum_{l\in \Z} \Dl e^{ilt}\in  P(\T, \Bl2).$$

Let us see first that such a function takes values in spheres of $\Bl2$ or $\Ml2$ whenever $\A$ belongs to $\Bl2$ or $\Ml2$.
\begin{prop}\label{prop_norma_f_a} Let $\A=(\Tkj)$ be a matrix and $t\in [-\pi,\pi)$.

(i) If $\A\in \Bl2$ then $\norm{f_\A(t)}_{\Bl2}=\norm{\A}_{\Bl2}$.

(ii) If $\A\in \Ml2$ then $\norm{f_\A(t)}_{\Ml2}=\norm{\A}_{\Ml2}$.

(iii) If $\A\in {\mathcal M}_l(\l2)$ then $\norm{f_\A(t)}_{{\mathcal M}_l(\l2)}=\norm{\A}_{{\mathcal M}_l(\l2)}$.
\end{prop}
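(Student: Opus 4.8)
The plan is to exploit the fact that $f_\A(t) = {\bf M}_t * \A$ is obtained from $\A$ by a Schur product with an invertible element of the Toeplitz family, namely ${\bf M}_t = (e^{i(j-k)t}Id)$, whose inverse under Schur product is ${\bf M}_{-t}$. First I would record the basic algebraic identity: since $\eta = \delta_{-t}$ gives ${\bf M}_t$, the convolution formula (\ref{bf}) (and more simply a direct check on entries) yields ${\bf M}_t * {\bf M}_{-t} = {\bf M}_0 = Id$, the identity matrix, and hence ${\bf M}_{-t} * f_\A(t) = {\bf M}_{-t} * ({\bf M}_t * \A) = \A$. So $\A$ and $f_\A(t)$ are Schur products of each other with the matrices ${\bf M}_{\pm t}$. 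The second ingredient is that ${\bf M}_t \in {\mathcal M}(\ell^2(H))$ with $\|{\bf M}_t\|_{{\mathcal M}(\ell^2(H))} = 1$: indeed ${\bf M}_t = (e^{i(j-k)t}Id)$ is of the scalar-times-operator form of Proposition \ref{p2} with $A = (e^{i(j-k)t})$ the bounded Toeplitz matrix associated to the unimodular constant $e^{it}$ via (\ref{tb}) (or (\ref{tt})), so $\|{\bf M}_t\|_{{\mathcal M}(\ell^2(H))} = \|A\|_{{\mathcal M}(\ell^2)}\|Id\|_{\B(H)} = 1$; the same holds for ${\bf M}_{-t}$.

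With these two facts the three statements are symmetric and follow the same template, which I would carry out for (i) and then note that (ii) and (iii) are identical in form.

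For (i): assume $\A \in \Bl2$. Since ${\bf M}_t$ is a (two-sided) Schur multiplier of norm $1$, and $f_\A(t) = {\bf M}_t * \A$, we get $f_\A(t) \in \Bl2$ with $\|f_\A(t)\|_{\Bl2} \le \|{\bf M}_t\|_{{\mathcal M}_l(\ell^2(H))}\|\A\|_{\Bl2} = \|\A\|_{\Bl2}$. For the reverse inequality apply the same estimate to the matrix $f_\A(t)$ and the multiplier ${\bf M}_{-t}$: $\A = {\bf M}_{-t} * f_\A(t)$, so $\|\A\|_{\Bl2} \le \|{\bf M}_{-t}\|_{{\mathcal M}_l(\ell^2(H))}\|f_\A(t)\|_{\Bl2} = \|f_\A(t)\|_{\Bl2}$. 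Hence equality. For (ii) and (iii) replace $\Bl2$ by $\Ml2$ and by ${\mathcal M}_l(\l2)$ respectively and use that the relevant multiplier-norm is submultiplicative under Schur product (for (ii) this is the estimate $\|\A * \bold B\|_{\Ml2} \le \|\A\|_{\Ml2}\|\bold B\|_{\Ml2}$ used already in the proof of Proposition \ref{prop:sobre_las_sigmas}(ii), applied with $\bold B = {\bf M}_{\pm t}$; for (iii) the analogous one-sided version), together with $\|{\bf M}_{\pm t}\|_{\Ml2} = \|{\bf M}_{\pm t}\|_{{\mathcal M}_l(\l2)} = 1$ from Proposition \ref{p2}.

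The only point requiring a little care — and the one I would treat as the main obstacle — is making sure the Schur-product submultiplicativity is invoked in the correct one-sided form and on the correct side: for right multipliers one wants $\bold C * \A \in \Bl2$ for all $\bold C \in \Bl2$, and one must check $\bold C * ({\bf M}_t * \A) = ({\bf M}_t * \bold C)*\A$ — i.e. that Schur product is associative and that ${\bf M}_t$ can be moved past — which holds entrywise since all three matrices act diagonally-compatibly; the noncommutativity of operator composition is harmless here because ${\bf M}_t$ has scalar multiples of the identity as entries, so $e^{i(j-k)t}Id$ commutes with every $\Tkj$ and every $C_{kj}$. Once associativity and that central-commuting remark are noted, all six inequalities are one-line applications of Proposition \ref{p2} and the norm computation $\|{\bf M}_{\pm t}\| = 1$.
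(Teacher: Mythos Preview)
Your proposal is correct and follows essentially the same route as the paper: the paper's proof simply invokes $\|{\bf M}_t\|_{\Ml2}=\|\delta_{-t}\|_{M(\T)}=1$ together with the inverse relation $\A=f_\A(t)\ast{\bf M}_{-t}$, and derives (iii) by adjoints or repetition. Your version is more explicit about associativity and the centrality of the scalar-identity entries, but the underlying argument is identical.
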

\begin{proof}
(i) and (ii) follow using that $\|{\bf M}_t\|_{\Ml2}= \|\delta_{-t}\|_{M(\T)}=1$ and the fact that $\A= f_\A(t)\ast {\bf M}_{-t}$.

(iii) follows from (ii) taking adjoints or repeating the argument above.
\end{proof}

\begin{prop}  Let $\A=(\Tkj)$ where $\Tkj=0$ for each $j\ne 2k$ and $T_{k,2k}=I$ for $k\in \N$. Then $\A\in \Bl2$ and  $f_\A$ is not strongly measurable with values in $\Bl2$.
\end{prop}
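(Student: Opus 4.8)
The plan is to verify $\A\in\Bl2$ by direct inspection, then to compute $f_\A$ explicitly, estimate $\|f_\A(t)-f_\A(s)\|_{\Bl2}$ from below by a fixed positive constant for every pair $t\neq s$, and finally to deduce that the range of $f_\A$ is non-separable on the complement of any null set, which contradicts strong measurability.

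First I would note that for $\x=(x_j)\in c_{00}(H)$ the matrix acts by $\A\x=(x_{2k})_{k}$, so $\|\A\x\|_{\l2}^2=\sum_k\|x_{2k}\|^2\le\|\x\|_{\l2}^2$; hence $\A\in\Bl2$ (in fact $\|\A\|_{\Bl2}=1$, the supremum being attained on vectors supported on even indices). By Proposition \ref{prop_norma_f_a}(i) we then have $f_\A(t)\in\Bl2$ for every $t$, so $f_\A\colon[-\pi,\pi)\to\Bl2$ is well defined; concretely $f_\A(t)$ is the matrix with $e^{ikt}I$ in position $(k,2k)$ and $0$ elsewhere, i.e. $f_\A(t)\x=(e^{ikt}x_{2k})_k$. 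Consequently $f_\A(t)-f_\A(s)$ acts by $\x\mapsto\big((e^{ikt}-e^{iks})x_{2k}\big)_k$, and since the columns $2k$ are pairwise distinct a direct computation gives $\|f_\A(t)-f_\A(s)\|_{\Bl2}=\sup_{k\ge1}|e^{ikt}-e^{iks}|=\sup_{k\ge1}|e^{ik(t-s)}-1|$.

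The key point is then the elementary estimate $\sup_{k\ge1}|e^{ik\theta}-1|\ge\sqrt3$ for every $\theta\notin2\pi\Z$. Reducing $\theta$ modulo $2\pi$ and using $|e^{ik\theta}-1|=|e^{-ik\theta}-1|$, one may assume $\theta\in(0,\pi]$; taking $k_0=\min\{k\ge1:k\theta\ge 2\pi/3\}$ one checks $k_0\theta\in[2\pi/3,4\pi/3]$ (indeed $(k_0-1)\theta<2\pi/3$ forces $k_0\theta<2\pi/3+\theta\le 4\pi/3$ when $\theta\le2\pi/3$, while $k_0=1$ when $\theta\in(2\pi/3,\pi]$), hence $k_0\theta/2\in[\pi/3,2\pi/3]$ and $|e^{ik_0\theta}-1|=2|\sin(k_0\theta/2)|\ge 2\sin(\pi/3)=\sqrt3$. (Alternatively, one could distinguish $\theta/2\pi$ rational from irrational and use the finite cyclic-subgroup structure, resp.\ Weyl equidistribution, of the multiples of $\theta$.) Therefore $\|f_\A(t)-f_\A(s)\|_{\Bl2}\ge\sqrt3$ for all $t\ne s$ in $[-\pi,\pi)$. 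Finally, if $f_\A$ were strongly measurable it would coincide, off some Lebesgue-null set $N$, with a function having separable range, so $\{f_\A(t):t\in[-\pi,\pi)\setminus N\}$ would be separable; but this set is $\sqrt3$-separated and, since $N$ is null, uncountable, which is impossible. The only genuinely non-routine step is the lower bound $\sup_{k}|e^{ik\theta}-1|\ge\sqrt3$, where the pigeonhole choice of $k_0$ carries the argument; the rest is bookkeeping around the definition of strong measurability.
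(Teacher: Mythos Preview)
Your proof is correct and follows the same approach as the paper: compute $(f_\A(t)-f_\A(s))\x=\big((e^{ikt}-e^{iks})x_{2k}\big)_k$, obtain $\|f_\A(t)-f_\A(s)\|_{\Bl2}=\sup_k|e^{ik(t-s)}-1|$, bound this below by a fixed positive constant, and invoke Pettis' criterion. Your lower bound $\sqrt3$ (with an explicit pigeonhole choice of $k_0$) is sharper than the paper's asserted $\sqrt2$, and your handling of the ``essentially separably valued'' clause is a bit more careful, but these are refinements rather than a different route.
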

\begin{proof}
 It is clear that $\A\in \Bl2$, since $\A\x= (x_{2k})_k$. However, taking $\x=(x_k)\in \ell^2(H)$ , we observe that
$$(f_\A(t)-f_\A(s))(\x)=\left((e^{ikt}-e^{iks})\cdot x_{2k}\right)_k.$$

Hence if $t\ne s$,
\begin{eqnarray*}
\normbh{f_\A(t)-f_\A(s)}&=&\sup_{\|\x\|_{\l2}\leq 1}\sqrt{\sum_k |e^{i(t-s)k}-1|^2\norm{x_{2k-1}}^2}\\
&=&\sup_{k\in\mathbb{N}}|e^{ik(t-s)}-1|\geq \sqrt 2.
\end{eqnarray*}
Therefore $\{f_\A(t): t\in [-\pi,\pi]\}$ is not separable in $\Bl2$. Hence $t\to f_\A(t)$ is not strongly measurable by Pettis' measurability theorem (see \cite{DU}).
\end{proof}

\begin{prop}  If $\A\in \Bl2$ then $f_\A$ is continuous in the strong operator topology, that is for each $\x\in \l2$, the map
$$t\to f_\A(t)(\x)=\left(e^{-ikt}\sum_{j=1}^\infty \Tkj(x_j)e^{ijt}\right)_k$$
belongs to $C(\T,\l2)$.
\end{prop}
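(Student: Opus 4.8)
The plan is to show that for fixed $\x = (x_j) \in \l2$, the map $t \mapsto f_\A(t)(\x) \in \l2$ is continuous. I would reduce the problem in two stages: first to polynomials, then pass to the limit using the uniform bound $\norm{f_\A(t)}_{\Bl2} = \norm{\A}_{\Bl2}$ from Proposition \ref{prop_norma_f_a}(i). Concretely, for $\A \in \Pl2$, $f_\A(t) = \sum_{l} \Dl e^{ilt}$ is a finite trigonometric sum with coefficients in $\Bl2$, so $t \mapsto f_\A(t)$ is clearly norm-continuous into $\Bl2$, hence certainly strongly continuous. For general $\A \in \Bl2$, however, $\A$ need not lie in $\Cl2$, so I cannot simply approximate $\A$ in operator norm by polynomials; instead I would approximate the vector $\x$ by finitely supported vectors and exploit the fact that when $\x \in c_{00}(H)$ only finitely many columns of $\A$ are involved.

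The key steps, in order: (1) Fix $\x \in c_{00}(H)$, say $\x = (x_1, \dots, x_N, 0, \dots)$. Then $f_\A(t)(\x) = \big(e^{-ikt}\sum_{j=1}^N \Tkj(x_j)e^{ijt}\big)_k$. Write $u_k(t) = e^{-ikt}\sum_{j=1}^N \Tkj(x_j)e^{ijt} \in H$; each $u_k$ is a continuous $H$-valued function of $t$, and the sequence $(u_k(t))_k$ lies in $\l2$ with $\sum_k \norm{u_k(t)}^2 = \norm{f_\A(t)(\x)}^2_{\l2} \le \norm{\A}^2_{\Bl2}\norm{\x}^2_{\l2}$ uniformly in $t$. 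To get continuity of $t \mapsto (u_k(t))_k$ into $\l2$, I would show the tail $\sum_{k > K} \norm{u_k(t)}^2$ is small uniformly in $t$: indeed $\sum_{k>K}\norm{u_k(t)}^2 = \sum_{k>K}\norm{\sum_{j\le N}\Tkj(x_j)e^{ijt}}^2$, and since $\A\x \in \l2$ for the fixed $\x$ this tail is the tail of a convergent series — but here I must be slightly careful since the summands depend on $t$. A clean way: $\sum_{k>K}\norm{u_k(t)}^2 \le \big(\sum_{j\le N}\big(\sum_{k>K}\norm{\Tkj(x_j)}^2\big)^{1/2}\big)^2$ by Minkowski in $\ell^2$, and each $\sum_{k>K}\norm{\Tkj(x_j)}^2 \to 0$ as $K\to\infty$ since $\Cj \in \Bl2$ (Example, part (ii)), giving a $t$-independent tail bound. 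Combined with continuity of each $u_k$, this yields continuity of $t\mapsto f_\A(t)(\x)$ for $\x \in c_{00}(H)$.

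(2) For general $\x \in \l2$, pick $\x_m \in c_{00}(H)$ with $\norm{\x - \x_m}_{\l2} \to 0$. Then
\[
\norm{f_\A(t)(\x) - f_\A(s)(\x)}_{\l2} \le \norm{f_\A(t)(\x-\x_m)}_{\l2} + \norm{(f_\A(t)-f_\A(s))(\x_m)}_{\l2} + \norm{f_\A(s)(\x_m-\x)}_{\l2},
\]
and the first and third terms are bounded by $\norm{\A}_{\Bl2}\norm{\x - \x_m}_{\l2}$ uniformly in $t,s$ by Proposition \ref{prop_norma_f_a}(i), while the middle term goes to $0$ as $s\to t$ by step (1). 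A standard $\varepsilon/3$ argument finishes the proof. The main obstacle is step (1): handling the $t$-dependence in the tail estimate, which is resolved by the Minkowski-inequality trick together with the column-boundedness $\Cj\in\Bl2$ that follows from $\A\in\Bl2$.
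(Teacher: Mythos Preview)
Your proof is correct and follows essentially the same approach as the paper: first establish SOT-continuity for $\x\in c_{00}(H)$ by combining continuity of each coordinate $u_k$ with a $t$-independent tail bound coming from the square-summability of the columns, then pass to general $\x\in\l2$ via density and the uniform bound $\norm{f_\A(t)}_{\Bl2}=\norm{\A}_{\Bl2}$. The only cosmetic difference is that the paper first treats the single-coordinate case $\x=x\mathbf{e}_j$ and then invokes linearity, whereas you handle all of $c_{00}(H)$ at once via the Minkowski inequality in $\ell^2$; both routes rest on the same column estimate $\sum_k\norm{T_{kj}(x_j)}^2<\infty$.
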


\begin{proof}
Assume first that $\x=x{\bf e}_j\in \ell^2(H)$ for some $x\in H$ and $j\in \N$.
Hence
$$f_\A(t)(x{\bf e}_j)=\left(T_{k,j}(x)e^{i(j-k)t}\right)_k.$$
Since $\A\in \Bl2$ we have that the series $\sum_k\norm{T_{k,j}(x)}^2$ is convergent. Then,  given  $\varepsilon>0$, select $N\in\N$ such that $\sum_{k=N}^\infty\norm{T_{k,j}(x)}^2<\varepsilon/4$.  Let $\varepsilon_j=\frac{\varepsilon}{2\cdot(N-1)\cdot\displaystyle\sup_{k<N}{\norm{T_{k,j}(x)}}}$ and let $\delta_j$ be defined such that $$ \displaystyle\sup_{k<N}|e^{i(j-k)u}-1|^2<\varepsilon_j, \quad \hbox{ for all } |u|<\delta_j.$$

Hence, if $s,t\in [-\pi,\pi)$ are such that $|s-t|<\delta_j$ one has
\begin{eqnarray*}
\|f_\A(t)(x{\bf e}_j)-f_\A(s)(x{\bf e}_j)\|_{\l2}^2&=&\sum_k\Norm{T_{k,j}(x)(e^{i(j-k)t}-e^{i(j-k)s})}^2\\
&=&\sum_{k=1}^{N-1}\Norm{T_{k,j}(x)(e^{i(j-k)t}-e^{i(j-k)s})}^2\\
&+&\sum_{k=N}^\infty\Norm{T_{k,j}(x)(e^{i(j-k)t}-e^{i(j-k)s})}^2\\
&\leq &\sup_{k<N}{\Norm{T_{k,j}(x)}}\cdot\sum_{k=1}^{N-1}\left|e^{i(j-k)t}-e^{i(j-k)s}\right|^2\\
&+& 2\cdot \varepsilon/4  \\
& \leq&\sup_{k<N}{\Norm{T_{k,j}(x)}}\cdot\sum_{k=1}^{N-1}\left|e^{i(j-k)(t-s)}-1\right|^2\\
&+&\varepsilon/2\;<\varepsilon.
\end{eqnarray*}

So we proved that $f_\A(t)(x{\bf e}_j)$ is continuous. Hence $f_\A(t)(\x)$ is also continuous for $\x\in c_{00}(H)$.
To prove the general case, consider $\x\in \ell^2(H)$, and select a sequence $(\x_N)_N:=((x_i)_{i=1}^{N})_N\subset c_{00}(H)$ converging to \x. Then
$$f_\A(t)(\x)= f_\A(t)(\x-\x_N)+f_\A(t)(\x_N)$$
and  invoke (i) in Proposition \ref{prop_norma_f_a} to get \begin{eqnarray*}\sup_t \norm{f_\A(t)(\x-\x_N)}_{\l2}&\leq& \sup_t \norm{f_\A(t)}_{\Bl2}\norm{\x-\x_N}_{\l2}\\
&=& \norm{\A}_{\Bl2}\cdot\norm{\x-\x_N}_{\l2}\xrightarrow[N\to\infty]{}0.\end{eqnarray*}
Therefore, $t\to f_\A(t)(\x)$  is a uniform limit of the continuous functions $t\to f_\A(t)(\x_N)$ and hence continuous.
\end{proof}

Let us now give several characterizations of matrices in $\Cl2$.

\begin{teor} \label{thm:caract_cont}
Let $\A$ be a matrix whose entries are in $\mathcal{B}(H)$. The following are equivalent:

1) $\A\in \Cl2$.

2) $\lim_{n\to \infty} M_n(\A)=\A$ in $\Bl2$
where $M_n(\A)=\bold{M}_{k_n}\ast \A$ and $\lbrace{k_n\rbrace}\subseteq L^1(\T)$ is a summability kernel.

3) $\lim_{n\to \infty} \sigma_n(\A)=\A$ in $\Bl2$.

4) $t\to f_\A(t)$ is a $\Bl2$-valued continuous function.

\end{teor}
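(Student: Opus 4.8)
The plan is to prove the cycle of implications $1)\Rightarrow 3)\Rightarrow 2)\Rightarrow 4)\Rightarrow 1)$, exploiting that $\sigma_n$ corresponds to the F\'ejer kernel, which is a particular summability kernel, so $3)$ is a special case of $2)$; nonetheless it is convenient to route through it. Throughout I would use the isometry from Proposition \ref{prop_norma_f_a}(i), namely $\norm{f_{\bf C}(t)}_{\Bl2}=\norm{\bf C}_{\Bl2}$ for ${\bf C}\in\Bl2$, applied to ${\bf C}=\A-M_n(\A)$ and to ${\bf C}=\A-\P$ for $\P\in\Pl2$, as well as the estimate (\ref{estima1}) giving $\norm{M_n(\A)}_{\Bl2}\le\norm{k_n}_{L^1(\T)}\norm{\A}_{\Bl2}$.

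For $1)\Rightarrow 3)$: given $\varepsilon>0$ pick $\P=\sum_{l=-N}^{M}\Dl'\in\Pl2$ with $\norm{\A-\P}_{\Bl2}<\varepsilon$. Since $\sigma_n(\P)\to\P$ in $\Bl2$ (a finite sum of diagonals, and $\widehat{K_n}(l)\to 1$ for each fixed $l$, while $\norm{\sigma_n(\P)}_{\Bl2}$ is controlled termwise by $\sup_l\norm{\Dl'}_{\Bl2}<\infty$ via (\ref{hip0})), and since $\norm{\sigma_n(\A)-\sigma_n(\P)}_{\Bl2}=\norm{{\bf M}_{K_n}\ast(\A-\P)}_{\Bl2}\le\norm{K_n}_{L^1}\norm{\A-\P}_{\Bl2}=\norm{\A-\P}_{\Bl2}<\varepsilon$ (the F\'ejer kernel is positive, so $\norm{K_n}_{L^1}=1$), a standard $3\varepsilon$ argument gives $\sigma_n(\A)\to\A$. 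The same argument, replacing $K_n$ by a general $k_n$ and using $\norm{k_n}_{L^1}\le C$, proves $1)\Rightarrow 2)$, so really one proves $1)\Rightarrow 2)$ directly and $3)$ is the instance $k_n=K_n$; I would state it that way to avoid repetition. The implication $2)\Rightarrow 3)$ (or $3)\Rightarrow 2)$) is then trivial since $K_n$ is a summability kernel.

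For $2)\Rightarrow 4)$ (taking $2)$ with the F\'ejer kernel, i.e. using $3)$): each $\sigma_n(\A)\in\Pl2$, hence $f_{\sigma_n(\A)}(t)=\sum_{l}(1-|l|/(n+1))_+\Dl e^{ilt}$ is a trigonometric polynomial with values in $\Bl2$, in particular continuous. By Proposition \ref{prop_norma_f_a}(i) applied to $\A-\sigma_n(\A)\in\Bl2$ we get $\sup_t\norm{f_\A(t)-f_{\sigma_n(\A)}(t)}_{\Bl2}=\norm{\A-\sigma_n(\A)}_{\Bl2}\to 0$, so $f_\A$ is a uniform limit of continuous $\Bl2$-valued functions, hence continuous. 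Finally $4)\Rightarrow 1)$: if $f_\A\in C(\T,\Bl2)$, then its F\'ejer means $\sigma_n(f_\A)=K_n\ast f_\A$ converge uniformly to $f_\A$ in $\Bl2$ by the classical (vector-valued) F\'ejer theorem; evaluating at $t=0$ and noting $\sigma_n(f_\A)(0)=\sum_l(1-|l|/(n+1))_+\Dl=\sigma_n(\A)$, while $f_\A(0)=\sum_l\Dl$ formally equals $\A$ — more carefully, $\sigma_n(f_\A)(0)=\int K_n(t)f_\A(t)\,\frac{dt}{2\pi}$ and $\norm{\sigma_n(f_\A)(0)-\A}_{\Bl2}=\norm{\sigma_n(f_\A)(0)-f_\A(0)}_{\Bl2}\le\sup_t\norm{\sigma_n(f_\A)(t)-f_\A(t)}_{\Bl2}\to 0$ using that $f_\A(0)=\A$ (which holds since $({\bf M}_0\ast\A)=\A$). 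As $\sigma_n(\A)\in\Pl2$, this exhibits $\A$ as a $\Bl2$-limit of polynomials, so $\A\in\Cl2$.

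The main obstacle I anticipate is the passage $4)\Rightarrow 1)$, specifically making rigorous that the F\'ejer means of the $\Bl2$-valued function $f_\A$ coincide with the matrices $\sigma_n(\A)$ and converge to $\A$: one must be careful that $f_\A(0)=\A$ literally and that the convolution $K_n\ast f_\A$ — a Bochner integral of a $\Bl2$-valued continuous function — commutes with evaluating the $(k,j)$ entry, i.e. that $\widehat{(f_\A)}(l)=\Dl$ as elements of $\Bl2$. This is where I would either invoke the vector-valued F\'ejer theorem from \cite{HNVW} directly, or argue entrywise: for fixed $x\ej,y\ek$ the scalar function $t\mapsto\ll f_\A(t)x\ej,y\ek\gg=e^{i(j-k)t}\la\Tkj x,y\ra$ has F\'ejer means converging to it, and the uniform (in $t$) $\Bl2$-bound $\norm{f_\A(t)}_{\Bl2}=\norm{\A}_{\Bl2}$ lets one upgrade entrywise convergence of $\sigma_n(f_\A)(0)$ to $\Bl2$-norm convergence via the continuity hypothesis. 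The other implications are routine once the isometry of Proposition \ref{prop_norma_f_a}(i) and the $L^1$-norm bound on kernels are in hand.
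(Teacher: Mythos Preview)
Your proposal is correct and follows essentially the same route as the paper: a $3\varepsilon$ argument for $1)\Rightarrow 2)$ using $\norm{M_n(\A)}_{\Bl2}\le\norm{k_n}_{L^1}\norm{\A}_{\Bl2}$ and $\hat k_n(l)\to 1$, specialization to the F\'ejer kernel for $2)\Rightarrow 3)$, and the isometry $\sup_t\norm{f_{\sigma_n(\A)}(t)-f_\A(t)}_{\Bl2}=\norm{\sigma_n(\A)-\A}_{\Bl2}$ for $3)\Rightarrow 4)$. The only cosmetic difference is in $4)\Rightarrow 1)$: the paper uses the same isometry $\norm{{\bf P}-\A}_{\Bl2}=\sup_t\norm{f_{\bf P}(t)-f_\A(t)}_{\Bl2}$ directly (with ${\bf P}=\sigma_n(\A)$ implicitly furnishing the approximating polynomial), whereas you evaluate at $t=0$; both are fine, and your worry about identifying $\sigma_n(f_\A)$ with $f_{\sigma_n(\A)}$ is easily dispatched entrywise since a matrix in $\Bl2$ is determined by its entries.
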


\begin{proof}

1)$\Rightarrow$ 2). Let $\varepsilon>0$, and select $\bold{P}=(S_{k,j})=\sum_{l=-N}^{N}{\bf D_l}\in\Pl2$ such that $\normbh{\A-\bold{P}}<\varepsilon/3C$ where
$C=\sup_n \|k_n\|_{L^1(\T)}\ge 1$ Then,
\ba
\norm{M_n(\bold{P})-\bold{P}}_{\Bl2}&=&\Norm{\sum_{l=-N}^{N}(\hat{k_n}(l)-1){\bf D_l}}_{\Bl2}\\
&\leq&\sup_{k,j}\norm{S_{k,j}}\cdot (2N+1)\cdot \max_{|l|\leq N}|\hat{k_n}(l)-1|\ea
 Since $\lbrace{k_n\rbrace}$ is a summability kernel, one has that $\hat k_n(l)\to 1$ as $n\to\infty$ $\forall l\in \Z$. So, we can choose $n_0\in\mathbb{N}$ such that $|\hat{k}_n(l)-1|<\frac{\varepsilon}{3(2N+1) \sup_{k,j}\norm{S_{k,j}}}$ $\forall n\geq n_0$ and $\forall |l|\leq N$. Hence, $\norm{M_n(\bold{P})-\bold{P}}_{\Bl2}<\varepsilon/3$.   Finally, for $n\ge n_0$,
 \begin{eqnarray*}
\normbh{M_n( \A)-\A}
&\leq&\normbh{\bold{M}_{k_n}\ast (\A-{\bf P})}\\
&+&\normbh{\bold{M}_{k_n}\ast \bold{P}-\bold{P}}+\normbh{\bold{P}-\A} \\
&\leq&\normmh{\bold{M}_{k_n}}\cdot\normbh{\A-\bold{P}}+\varepsilon/3+\varepsilon/3 \\
&\leq&\norm{k_n}_{L^1}\cdot \varepsilon/3C+\varepsilon/3+\varepsilon/3=\varepsilon. \\
\end{eqnarray*}

2)$\Rightarrow$ 3). It is obvious since the Fej\'er Kernel is an example of summability kernel.

 3)$\Rightarrow$ 4).  Observe that $\sigma_n(f_\A(t))=f_{\sigma_n(\A)}(t) \in P(\T, \Bl2)$.
Since
\ba
\sup_t\normbh{f_{\sigma_n(\A)}(t)-f_\A(t)}&=&\sup_t\normbh{f_{\sigma_n(\A)-\A}(t)}\\
&=&\normbh{\sigma_n(\A)-\A}\ea
then $f_\A$ is a uniform limit of $\Bl2$-valued  polynomials, hence it is a continuous function.
\bigskip

 4) $\Rightarrow$ 1). Since $f_\A\in C(\T,\Bl2)$ then it can be approximated in norm by polynomials in $P(\T, \Bl2)$. Using again
$$\normbh{{\bf P}-\A}{=}\sup_t\normbh{f_{\bf P}(t)-f_\A(t)}$$
one obtains the result.
\end{proof}

\begin{coro} $\mathcal A(\l2)$ is dense in $\Cl2$.
\end{coro}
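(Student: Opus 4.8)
The plan is to show that every $\A\in\Cl2$ can be approximated in $\Bl2$ by matrices in $\mathcal A(\l2)$, which together with the already-established inclusion $\mathcal A(\l2)\subset\Cl2$ (Proposition preceding Section 3) gives the density. The natural candidates for the approximants are the F\'ejer means $\sigma_n(\A)={\bf M}_{K_n}\ast\A$. First I would observe that if $\A\in\Cl2$, then condition (\ref{hip0}) holds (it holds for any matrix in $\Bl2$, as noted before Lemma \ref{lemma:sigma_n_wot}), so $\sigma_n(\A)\in\Pl2$; in particular $\sigma_n(\A)$ has only finitely many nonzero diagonals and each diagonal is bounded, hence $\sigma_n(\A)\in\mathcal A(\l2)$.

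Next, I would invoke the equivalence $1)\Leftrightarrow 3)$ in Theorem \ref{thm:caract_cont}: since $\A\in\Cl2$, we have $\sigma_n(\A)\to\A$ in $\Bl2$. Combining the two observations, $\sigma_n(\A)$ is a sequence in $\mathcal A(\l2)$ converging to $\A$ in the norm of $\Bl2$, which is exactly the statement that $\mathcal A(\l2)$ is dense in $\Cl2$.

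There is essentially no obstacle here: the corollary is a direct packaging of Theorem \ref{thm:caract_cont} together with the elementary fact that F\'ejer means of a matrix satisfying (\ref{hip0}) lie in the Wiener-type algebra $\mathcal A(\l2)$. The only point requiring a word of care is verifying that $\sigma_n(\A)\in\mathcal A(\l2)$, i.e. that $\sum_{l}\sup_k\|(1-|l|/(n+1))^{+}T_{k,k+l}\|<\infty$; but this sum has at most $2n+1$ nonzero terms, each bounded by $\sup_{j,k}\|T_{kj}\|<\infty$ by (\ref{hip0}), so it is trivially finite. Hence the proof is short:

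\begin{proof}
Let $\A=(\Tkj)\in\Cl2$. Since $\Cl2\subset\Bl2$, condition (\ref{hip0}) holds, so $\sigma_n(\A)={\bf M}_{K_n}\ast\A=\sum_{l=-n}^n\left(1-\frac{|l|}{n+1}\right){\bf D}_l$ has finitely many nonzero diagonals, each with bounded norm, and therefore $\sigma_n(\A)\in\mathcal A(\l2)$. By Theorem \ref{thm:caract_cont}, $\sigma_n(\A)\to\A$ in $\Bl2$. Thus every element of $\Cl2$ is a limit in $\Bl2$ of elements of $\mathcal A(\l2)$, which proves the claim.
\end{proof}
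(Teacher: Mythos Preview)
Your proof is correct and follows essentially the same approach as the paper: both invoke Theorem~\ref{thm:caract_cont} to produce summability-kernel approximants that lie in $\mathcal A(\l2)$. The only difference is that the paper uses the Poisson means $P_r(\A)=\sum_l r^{|l|}\Dl$ (which lie in $\mathcal A(\l2)\setminus\Pl2$) rather than the F\'ejer means; your choice is arguably even more direct, since $\sigma_n(\A)\in\Pl2$ and $\Pl2\subset\mathcal A(\l2)$ trivially---indeed, this observation alone, together with the very definition of $\Cl2$ as the closure of $\Pl2$, already yields the corollary without appealing to Theorem~\ref{thm:caract_cont}.
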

\begin{proof} Given $\A\in \Cl2$ one has that $P_r(\A)={\bf{ M}}_{P_r}*\A$ converges to $\A$ in $\Bl2$. Since
$P_r(\A)=\sum_l r^{|l|}\Dl \in \mathcal A(\l2)$ for each $0<r<1$  the result follows.
\end{proof}

\begin{nota} $\Cl2$ is an ideal of $\Bl2$, that is to say if $\A\in \Cl2$ and ${\bf B}\in \Bl2$ then
$ \A\ast {\bf B}\in \Cl2 \hbox{ and } {\bf B}\ast \A\in \Cl2.$
\end{nota}

\begin{defi} We write $(\Cl2,\Cl2)_l$  for the set of matrices $\A$ such that
$$ \A\ast {\bf B}\in \Cl2 \quad \forall {\bf B}\in \Cl2. $$
 And similar definition for  $(\Cl2,\Cl2)_r$.
\end{defi}

\begin{teor} \label{thm:mult_c_to_c}
$ \A\in \M_l(\l2)$ (respectively $\A\in \M_r(\l2)$) if and only if $\A\in (\Cl2,\Cl2)_l$
(respectively $\A\in (\Cl2,\Cl2)_r$).
\end{teor}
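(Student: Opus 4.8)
The plan is to prove the equivalence for left multipliers; the right-multiplier statement then follows by taking adjoints, using that $\A\mapsto\A^*$ is an isometry of $\Bl2$ mapping $\Pl2$ onto itself, hence $\Cl2$ onto itself, together with $(\A\ast{\bf B})^*={\bf B}^*\ast\A^*$ and the equivalence $\A\in\M_l(\l2)\Leftrightarrow\A^*\in\M_r(\l2)$ recorded in the introduction. Both implications rest on the elementary identity
$$\sigma_n(\A\ast {\bf B})=\A\ast\sigma_n({\bf B})=\sigma_n(\A)\ast {\bf B},$$
valid for arbitrary matrices $\A,{\bf B}$ with entries in $\B(H)$: the $(k,j)$ entry of $\sigma_n(\cdot)={\bf M}_{K_n}\ast(\cdot)$ is just the $(k,j)$ entry multiplied by the scalar $\hat K_n(j-k)$, and scalars pull through compositions. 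The same holds with any summability kernel in place of the F\'ejer kernel.

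For the implication $\A\in\M_l(\l2)\Rightarrow\A\in(\Cl2,\Cl2)_l$, let ${\bf B}\in\Cl2$. Then $\A\ast {\bf B}\in\Bl2$ since $\A$ is a left multiplier, so by Theorem \ref{thm:caract_cont} it is enough to check that $\sigma_n(\A\ast {\bf B})\to\A\ast {\bf B}$ in $\Bl2$. By the identity above this sequence equals $\A\ast\sigma_n({\bf B})$, and Theorem \ref{thm:caract_cont} applied to ${\bf B}\in\Cl2$ gives $\sigma_n({\bf B})\to {\bf B}$ in $\Bl2$; since the map ${\bf C}\mapsto\A\ast {\bf C}$ is bounded on $\Bl2$ (closed graph theorem, giving $\|\A\ast {\bf C}\|_{\Bl2}\le\|\A\|_{\M_l(\l2)}\|{\bf C}\|_{\Bl2}$), we conclude $\A\ast\sigma_n({\bf B})\to\A\ast {\bf B}$ in $\Bl2$, hence $\A\ast {\bf B}\in\Cl2$.

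For the converse, assume $\A\in(\Cl2,\Cl2)_l$. The linear map $\Lambda\colon\Cl2\to\Cl2$, $\Lambda({\bf B})=\A\ast {\bf B}$, has closed graph: convergence in $\Bl2$-norm forces norm convergence of each entry, so if ${\bf B}_m\to {\bf B}$ and $\A\ast {\bf B}_m\to {\bf C}$ in $\Cl2$ then, comparing $(k,j)$ entries, $C_{kj}=\lim_m\Tkj (B_m)_{kj}=\Tkj B_{kj}$, i.e. ${\bf C}=\A\ast {\bf B}$. As $\Cl2$ is closed in the Banach space $\Bl2$, the closed graph theorem furnishes $K>0$ with $\|\A\ast {\bf B}\|_{\Bl2}\le K\|{\bf B}\|_{\Bl2}$ for all ${\bf B}\in\Cl2$. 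Now take any ${\bf B}\in\Bl2$. Each $\sigma_n({\bf B})$ has finitely many diagonals and, since $\sup_{k,j}\|B_{kj}\|\le\|{\bf B}\|_{\Bl2}$, satisfies (\ref{hip0}); hence $\sigma_n({\bf B})\in\Pl2\subseteq\Cl2$ and, using the identity above and (\ref{estima1}) for the F\'ejer kernel,
$$\|\sigma_n(\A\ast {\bf B})\|_{\Bl2}=\|\A\ast\sigma_n({\bf B})\|_{\Bl2}\le K\|\sigma_n({\bf B})\|_{\Bl2}\le K\|{\bf B}\|_{\Bl2}.$$
Thus $\sup_n\|\sigma_n(\A\ast {\bf B})\|_{\Bl2}<\infty$, and Proposition \ref{prop:sobre_las_sigmas}(i) gives $\A\ast {\bf B}\in\Bl2$; letting $n\to\infty$ in the weak operator topology (Lemma \ref{lemma:sigma_n_wot} applied to $\A\ast{\bf B}$) also yields $\|\A\ast {\bf B}\|_{\Bl2}\le K\|{\bf B}\|_{\Bl2}$. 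Hence $\A\in\M_l(\l2)$.

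The first implication is essentially formal; the main obstacle is the converse, where one must promote a multiplier property known only on the small space $\Cl2$ to all of $\Bl2$. The nonformal ingredients there are exactly the uniform bound produced by the closed graph theorem, the fact that $\sigma_n({\bf B})\in\Cl2$ for every ${\bf B}\in\Bl2$, and Proposition \ref{prop:sobre_las_sigmas}(i).
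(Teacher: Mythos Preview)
Your proof is correct and follows essentially the same route as the paper: both directions hinge on the commutation identity $\sigma_n(\A\ast{\bf B})=\A\ast\sigma_n({\bf B})$, and the converse is obtained by bounding $\|\sigma_n(\A\ast{\bf B})\|_{\Bl2}$ via the multiplier norm on $\Cl2$ and then passing to the WOT limit. The only difference is cosmetic: you explicitly justify the finiteness of $\|\A\|_{(\Cl2,\Cl2)_l}$ via the closed graph theorem, whereas the paper simply uses that constant without comment.
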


\begin{proof}
We shall give the proof for left Schur multipliers.
Let us assume that $\A\in\M_l(\l2)$ and  $\bold{B}\in \Cl2$. To show that  $\A\ast \bold{B}\in \Cl2$ observe that $\sigma_n(\A\ast \bold{B})=\A\ast \sigma_n({\bf B})$ and take limits as $n\to \infty$.

Suppose now that $\A\in (\Cl2,\Cl2)_l$, and let $\bold{B}\in\Bl2$. Note that $\sigma_n(\bold{B})\in \Pl2\subset \mathcal{C}(\ell^2(H))$. Therefore, by hypothesis, we have that for all $n\in\mathbb{N}$,
\begin{eqnarray*}
\normbh{\sigma_n(\A\ast \bold{B})}&=&\normbh{\A\ast\sigma_n(\bold{B})}\\
&\leq&\norm{\A}_{(\Cl2,\Cl2)_l}\cdot\normbh{\sigma_n(\bold{B})}\\
&\leq&\norm{\A}_{(\Cl2,\Cl2)_l}\cdot\normbh{\bold{B}}.
\end{eqnarray*}

This, in particular, means that for all $\x,\y\in \l2$ with norm $1$,
$$|\ll\sigma_n(\A\ast \bold{B})(\x),\y\gg |\leq\norm{\A}_{(\Cl2,\Cl2)_l}\cdot\normbh{\bold{B}}\quad \forall n\in\mathbb{N}.$$
A use of Lemma \ref{lemma:sigma_n_wot} gives us that
$$|\ll \A\ast \bold{B}(\x),\y\gg |\leq\norm{\A}_{(\Cl2,\Cl2)_l}\cdot\normbh{\bold{B}},$$
and therefore
$$\normbh{\A\ast \bold{B}}\leq\norm{\A}_{(\Cl2,\Cl2)_l}\cdot\normbh{\bold{B}},$$
so $\A\in \M_l(\l2)$ and $\|\A\|_{\mathcal{M}_l(\ell^2(H))}\le \|\A\|_{(\Cl2,\Cl2)_l}.$
\end{proof}

\subsection{The Toeplitz case}

In this section we restrict ourselves to the case of Toeplitz matrices.
We write $\Bl2_{\mathcal T}= \Bl2 \cap {\mathcal T}$ and $\Cl2_{\mathcal T}= \Cl2 \cap {\mathcal T}$.

Although $\Bl2_\mathcal T$ can be identified (as mentioned in the introduction) with $V^\infty(\T,\B(H))$, we shall give a direct proof of the corresponding result in the setting of continuous matrices, not relying on that fact. We shall use next lemma.
\begin{lema}
\label{lema:norma}
Let $f\in C(\mathbb{T},\mathcal{B}(H))$, and consider $\A_f=(T_{k,j})_{k,j}$ with $T_{k,j}:=\hat{f}(j-k)$. Then $\A_f\in \Cl2_{\mathcal T}$ with
$$\normbh{\A_f}=\norm{f}_{C(\T,\Bl2)}$$

\end{lema}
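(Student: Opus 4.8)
The statement to prove is Lemma~\ref{lema:norma}: for $f\in C(\T,\B(H))$, the Toeplitz matrix $\A_f=(\hat f(j-k))_{k,j}$ lies in $\Cl2_{\mathcal T}$ and $\normbh{\A_f}=\norm{f}_{C(\T,\Bl2)}$. Here I read $\norm{f}_{C(\T,\Bl2)}$ as $\sup_t\norm{\hat f(\cdot)\text{-shifted matrix evaluated at }t}$; more precisely, using Definition of $f_\A$, note that for a Toeplitz $\A_f$ one has $f_{\A_f}(t)=(e^{i(j-k)t}\hat f(j-k))_{k,j}$, which is exactly the Toeplitz matrix generated by the function $s\mapsto f(s-t)$ (translation), so all the $f_{\A_f}(t)$ are unitarily equivalent and $\norm{f_{\A_f}(t)}_{\Bl2}$ is independent of $t$; this is consistent with Proposition~\ref{prop_norma_f_a}(i) once we know $\A_f\in\Bl2$. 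So the real content is two things: (a) $\A_f\in\Cl2$, and (b) the norm identity $\normbh{\A_f}=\norm{f}_{C(\T,\Bl2)}$.

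For (a), the plan is to approximate $f$ in $C(\T,\B(H))$ by its Fej\'er means $\sigma_n(f)=K_n\ast f$, which are trigonometric polynomials with coefficients $\hat{\sigma_n(f)}(l)=(1-\tfrac{|l|}{n+1})\hat f(l)$ for $|l|\le n$. The matrix $\A_{\sigma_n(f)}$ then has only finitely many diagonals, and since $\sup_{k,j}\norm{\hat{\sigma_n(f)}(j-k)}\le \sup_l\norm{\hat f(l)}\le\norm{f}_{L^\infty}<\infty$, it belongs to $\Pl2$. It remains to show $\normbh{\A_f-\A_{\sigma_n(f)}}\to 0$; but $\A_f-\A_{\sigma_n(f)}=\A_{f-\sigma_n(f)}$ is again Toeplitz, generated by $g_n=f-\sigma_n(f)\in C(\T,\B(H))$ with $\norm{g_n}_{L^\infty(\T,\B(H))}\to 0$. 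So it suffices to prove the one-sided bound $\normbh{\A_g}\le\norm{g}_{L^\infty(\T,\B(H))}$ for $g\in C(\T,\B(H))$; applying it to $g_n$ gives convergence and hence $\A_f\in\Cl2$, and of course $\A_f\in\mathcal T$ by construction.

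For (b), I would prove the two inequalities separately. The bound $\normbh{\A_f}\le\norm{f}_{L^\infty(\T,\B(H))}$ — which also finishes (a) as noted — is the key estimate: given $\x=(x_j),\y=(y_k)\in c_{00}(H)$, write $\ll\A_f\x,\y\gg=\sum_{k,j}\la\hat f(j-k)x_j,y_k\ra$ and insert $\hat f(j-k)=\tfrac1{2\pi}\int_0^{2\pi}f(t)e^{-i(j-k)t}\,dt$, so that $\ll\A_f\x,\y\gg=\tfrac1{2\pi}\int_0^{2\pi}\sum_{k,j}\la f(t)(e^{ijt}x_j),e^{ikt}y_k\ra\,dt=\tfrac1{2\pi}\int_0^{2\pi}\la f(t)X(t),Y(t)\ra_H\,dt$ where... wait, this inner bracket should instead be organized so that $f(t)$ acts diagonally: define $\x(t)=(e^{ijt}x_j)_j\in\l2$, then the sum becomes $\ll (\mathrm{diag}(f(t)))\,\x(t),\y(t)\gg_{\l2}$, whose absolute value is $\le\norm{f(t)}_{\B(H)}\norm{\x(t)}_{\l2}\norm{\y(t)}_{\l2}=\norm{f(t)}_{\B(H)}\norm{\x}_{\l2}\norm{\y}_{\l2}$. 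Integrating in $t$ against $\tfrac{dt}{2\pi}$ and using $\sup_t\norm{f(t)}_{\B(H)}=\norm{f}_{L^\infty}$ gives $|\ll\A_f\x,\y\gg|\le\norm{f}_{L^\infty}\norm{\x}_{\l2}\norm{\y}_{\l2}$, hence $\normbh{\A_f}\le\norm{f}_{C(\T,\B(H))}$. For the reverse inequality, fix $t_0$; since $\norm{f(t_0)}_{\B(H)}=\sup_{\norm x=\norm y=1}|\la f(t_0)x,y\ra|$, and $\la f(t_0)x,y\ra=\sum_l\la\hat f(l)x,y\ra e^{ilt_0}$, one recognizes this as a value of a scalar continuous function on $\T$ whose Fourier coefficients are $\la\hat f(l)x,y\ra$; the corresponding scalar Toeplitz matrix $(\la\hat f(j-k)x,y\ra)$ has $\B(\ell^2)$-norm equal to $\norm{\la f(\cdot)x,y\ra}_{L^\infty(\T)}\ge|\la f(t_0)x,y\ra|$ by~(\ref{tt}). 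Testing $\A_f$ on vectors of the form $\x=(\alpha_j x)$, $\y=(\beta_k y)$ as in Example~\ref{esc_to_op} relates $\normbh{\A_f}$ to the $\B(\ell^2)$-norm of this scalar Toeplitz matrix, giving $\normbh{\A_f}\ge|\la f(t_0)x,y\ra|$ for all unit $x,y$ and all $t_0$, hence $\normbh{\A_f}\ge\sup_{t_0}\norm{f(t_0)}_{\B(H)}=\norm{f}_{C(\T,\B(H))}$.

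The main obstacle I anticipate is purely bookkeeping: making the diagonalization step $\ll\A_f\x,\y\gg=\tfrac1{2\pi}\int\ll\mathrm{diag}(f(t))\x(t),\y(t)\gg\,dt$ rigorous (interchanging the finite sum with the integral is trivial for $\x,\y\in c_{00}(H)$, so there is no real analytic subtlety), and correctly identifying $\norm{f}_{C(\T,\Bl2)}$ with $\norm{f}_{L^\infty(\T,\B(H))}=\sup_t\norm{f(t)}_{\B(H)}$ — i.e.\ checking that for Toeplitz $\A_f$, $\norm{f_{\A_f}(t)}_{\Bl2}=\norm{f}_{L^\infty}$, which follows by applying the already-proved identity to the translated function and invoking Proposition~\ref{prop_norma_f_a}(i). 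Everything else is a direct combination of~(\ref{tt}), Example~\ref{esc_to_op}, and density of trigonometric polynomials in $C(\T,\B(H))$.
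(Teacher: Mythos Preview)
Your overall strategy coincides with the paper's: the upper bound via the integral representation, the lower bound by testing on vectors $(\alpha_j x)$, $(\beta_k y)$ and invoking (\ref{tt}), and membership in $\Cl2$ by polynomial (Fej\'er) approximation. However, your ``wait'' self-correction introduces a genuine error: the double sum $\sum_{k,j}\la f(t)(e^{-ijt}x_j),e^{-ikt}y_k\ra$ (mind the sign of the exponent) does \emph{not} equal $\ll\mathrm{diag}(f(t))\x(t),\y(t)\gg_{\l2}$, since the latter is $\sum_k\la f(t)x_k,y_k\ra$, which is independent of $t$ and has nothing to do with $\A_f$. Your first instinct was the correct one: the sum factors as $\la f(t)X(t),Y(t)\ra_H$ with $X(t)=\sum_j e^{-ijt}x_j\in H$ and $Y(t)=\sum_k e^{-ikt}y_k\in H$; then bound by $\|f(t)\|_{\B(H)}\|X(t)\|_H\|Y(t)\|_H$, integrate, and apply Cauchy--Schwarz in $L^2(\T)$ together with Parseval $\int_0^{2\pi}\|X(t)\|_H^2\,\dt=\|\x\|_{\l2}^2$. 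This is precisely the paper's argument, phrased there through the identification $\B(H)=(H\hat\otimes H)^*$ and the estimate $\int_0^{2\pi}\|X(t)\otimes Y(t)\|_{H\hat\otimes H}\,\dt\le\|\x\|_{\l2}\|\y\|_{\l2}$. (Incidentally, the subscript $\Bl2$ in the statement is a typo for $\B(H)$; you need not try to relate it to $f_{\A_f}$.)
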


\begin{proof}
Let $\x=(x_j),\y=(y_k)\in \ell^2(H)$ and observe  that
\ba   & &\bigintss_{0}^{2\pi} \left\lVert \sum_j   x_{j}e^{-ijt}\otimes  \sum_ky_k e^{ikt}\right\rVert_{H\hat{\otimes}H}\; \frac{dt}{2\pi}=\bigintss_0^{2\pi} \Norm{\sum_j  x_{j}e^{-ijt}}\Norm{\sum_ky_k e^{ikt}}\frac{dt}{2\pi}\\
  &\le &   \left(\bigintss_0^{2\pi} \Norm{\sum_j  x_{j}e^{-ijt}}^2\frac{dt}{2\pi}\right)^{1/2}\left(\bigintss_0^{2\pi}\Norm{\sum_ky_k e^{ikt}}^2\frac{dt}{2\pi}\right)^{1/2}\\
  &=&\|\x\|_{\ell^2(H)}\|\y\|_{\ell^2(H)}.
  \ea
  Hence $(\sum_j  x_{j}e^{-ijt})\otimes (\sum_ky_k e^{ikt})\in L^1(\T, H\hat\otimes H)$. Identifying $\B(H)=(H\hat\otimes H)^*$ by means of the formula
  $T(x\otimes y)=\la Tx,y\ra$, we can write

\begin{eqnarray*}
|\ll \A_f\x,\y\gg|&=&\left|\sum_{k,j}\presc{T_{k,j}x_j}{y_k}\right|=\left|\sum_{k,l}\presc{T_lx_{l+k}}{y_k}\right|\\
&=&\left|\sum_l\ T_l\Big(\sum_k x_{l+k}\otimes y_k\Big)\right|\\
&=&\left|\bigintss_0^{2\pi}\left(\sum_l\hat f(l) e^{ilt}\right)\left(\mathlarger{\mathlarger{\sum}}_l \left(\sum_k x_{l+k}\otimes y_k\right) e^{-ilt}\right)\frac{dt}{2\pi}\right|\\
&=&\left|\bigintss_0^{2\pi} f(t)\left(\left(\sum_j  x_{j}e^{-ijt}\right)\otimes \left(\sum_ky_k e^{ikt}\right)\right)\frac{dt}{2\pi}\right|\\
&\leq&\bigintss_0^{2\pi} \norm{f(t)}_{\mathcal{B}(\ell^2(H))}\Norm{\left(\sum_j  x_{j}e^{-ijt}\right)\otimes \left(\sum_ky_k e^{ikt}\right)}_{H\hat\otimes H}\frac{dt}{2\pi}\\
&=& \norm{f}_{C(\T,\Bl2)} \bigintss_0^{2\pi} \Norm{\sum_j  x_{j}e^{-ijt}}\cdot \Norm{\sum_ky_k e^{ikt}}\frac{dt}{2\pi}\\
&=& \norm{f}_{C(\T,\Bl2)}\norm{\x}_{\ell^2(H)}\norm{\y}_{\ell^2(H)}.
\end{eqnarray*}

Therefore $\normbh{\A_f}\leq\norm{f}_{C(\T,\Bl2)}$.

Let us check the other inequality. In order to do that, select $(x\alpha_j)$ and $(y\beta_k)$, where $x,y\in H$ are unitary and $(\alpha_j),(\beta_k)$ belong to the unit sphere of $\ell^2$. Hence
\begin{equation*}
\begin{split}
\left|\Presc{\A_f(x\alpha_j)}{y \beta_k}\right|&=\left|\sum_{k,j}\presc{T_{j-k}x}{y}\alpha_j\beta_k\right|=\left|\sum_{l}\presc{T_lx}{y}\sum_k\alpha_{l+k}\beta_k\right|\\
&=\left|\bigintsss_0^{2\pi} \left(\sum_l\presc{T_{l}x}{y}e^{ilt}\right)\left(\sum_l \sum_k \alpha_{l+k}\beta_k e^{-ilt}\right)\frac{dt}{2\pi}\right|\\
&=\left|\bigintsss_0^{2\pi} \left(\sum_l\presc{T_{l}x}{y}e^{ilt}\right)\left(\sum_j\alpha_je^{ijt} \sum_k \beta_{k} e^{-ikt}\right)\frac{dt}{2\pi}\right|\\
\end{split}
\end{equation*}

And taking into account that
$$\sup_t\Norm{\sum_l\presc{T_{l}x}{y}e^{ilt}}= \sup_{\|g\|_{L^1(\T)}=1} \left|\bigintsss_0^{2\pi} \sum_l\presc{T_{l}x}{y}e^{ilt} g(t) \frac{dt}{2\pi}\right|$$
together with the factorization $L^2(\T)\cdot L^2(\T)= L^1(\T)$ we obtain that
$$\sup_t\Norm{\sum_l\presc{T_{l}x}{y}e^{ilt}}=\sup_{\substack{\norm{(\alpha_j)}=1\\\norm{(\beta_k)}=1}}|\presc{\A_f(x\alpha_j)}{y \beta_k}|\leq\norm{\A_f}_{\Bl2}\|x\|\|y\|.$$
Finally, since
\ba
\norm{f}_{C(\T,\B(H))}&=&\sup_t\sup_{\substack{\norm{x}=1\\\norm{y}=1}}
\Norm{\sum_l\presc{T_{l}x}{y}e^{ilt}}\\
&=&\sup_{\substack{\norm{x}=1\\\norm{y}=1}}\sup_t\Norm{\sum_l\presc{T_{l}x}{y}e^{ilt}}\\
&\leq&\norm{\A_f}_{\Bl2}\ea
we conclude that $\A_f\in \Bl2_{\mathcal T}$ and $\norm{f}_{C(\T,\B(H))}=\norm{\A_f}_{\Bl2}$. To obtain that $\A_f\in \Cl2$ simply observe that
if $P$ is a polynomial in ${\mathcal P}(\T, \B(H))$ then $\A_P\in {\mathcal P}(\l2)$ and the proof is complete using an approximation argument.
\end{proof}

When we consider the Toeplitz case, we have the following result that identifies $\Cl2$ with the space of continuous functions.

\begin{teor}
\label{prp:continuasoper}
Let $(T_l)_{l\in \Z}$ be a sequence of operators in $\B(H)$ and let $\A=(T_{j-k})_{k,j}$. Then,
$\A\in \mathcal{C}(\ell^2(H))_{\mathcal T}$ if and only if there exists
$$  g_\A\in C(\mathbb{T},\mathcal{B}(H))\text{ such that } \widehat{g_\A}(l)=T_l.$$
Furthermore, $\norm{g_\A}_{C(\T,\B(H))}=\norm{\A}_{\Bl2}.$
\end{teor}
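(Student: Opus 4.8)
The statement is an ``if and only if'' together with an isometric identification, and the natural strategy is to combine Lemma \ref{lema:norma} with the characterization of $\Cl2$ from Theorem \ref{thm:caract_cont}. First I would prove the easy implication: if there exists $g_\A\in C(\T,\B(H))$ with $\widehat{g_\A}(l)=T_l$, then $\A=\A_{g_\A}$ in the notation of Lemma \ref{lema:norma}, so that lemma gives directly $\A\in \Cl2_{\mathcal T}$ together with $\normbh{\A}=\norm{g_\A}_{C(\T,\Bl2)}$. (Here one should also note $C(\T,\Bl2)$ and $C(\T,\B(H))$ coincide for Toeplitz-type data, or rather that the $\Bl2$-norm of the constant-diagonal polynomial matrix associated to a value of $g$ equals the $\B(H)$-norm of that value, which is exactly the content of the two-sided estimate in Lemma \ref{lema:norma}.) This already yields the ``Furthermore'' part.

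For the converse, suppose $\A=(T_{j-k})\in \Cl2_{\mathcal T}$. The idea is to recover $g_\A$ as the limit of the matrix-valued functions $f_{\sigma_n(\A)}$, or equivalently to set $g_\A(t)=\sum_l \widehat{K_n}(l)\,T_l\,e^{ilt}$ in the limit. Concretely, since $\A\in\Cl2$, Theorem \ref{thm:caract_cont} gives $\sigma_n(\A)\to \A$ in $\Bl2$; moreover each $\sigma_n(\A)$ is a Toeplitz polynomial matrix, hence equals $\A_{g_n}$ for the trigonometric polynomial $g_n(t)=\sum_{|l|\le n}(1-\tfrac{|l|}{n+1})T_l e^{ilt}\in P(\T,\B(H))$. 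By the isometry in Lemma \ref{lema:norma} applied to differences, $\norm{g_n-g_m}_{C(\T,\B(H))}=\normbh{\sigma_n(\A)-\sigma_m(\A)}\to 0$, so $(g_n)$ is Cauchy in $C(\T,\B(H))$ and converges to some $g_\A\in C(\T,\B(H))$. Taking Fourier coefficients is continuous on $C(\T,\B(H))$, and $\widehat{g_n}(l)=(1-\tfrac{|l|}{n+1})T_l\to T_l$, so $\widehat{g_\A}(l)=T_l$ for every $l\in\Z$. Finally $\normbh{\A}=\lim_n\normbh{\sigma_n(\A)}=\lim_n\norm{g_n}_{C(\T,\B(H))}=\norm{g_\A}_{C(\T,\B(H))}$, closing the loop.

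The only genuinely delicate point is making sure that the passage ``$\sigma_n(\A)$ is a Toeplitz polynomial matrix $\Rightarrow$ it comes from a trigonometric polynomial in $P(\T,\B(H))$ to which Lemma \ref{lema:norma} applies'' is legitimate, i.e. that $\sup_{k}\|T_{k,k+l}\|<\infty$ for each $l$ (so that the diagonals are bounded operators and the matrix is in $\Pl2$); but for a Toeplitz matrix this is automatic since $T_{k,k+l}=T_l$, and $\A\in\Bl2$ forces $\|T_l\|\le\normbh{\A}<\infty$ via the observation recorded before Lemma \ref{lemma:sigma_n_wot}. Everything else is the Cauchy-sequence argument in $C(\T,\B(H))$ plus continuity of $g\mapsto\widehat{g}(l)$, both routine. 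I do not expect any obstacle beyond carefully invoking Lemma \ref{lema:norma} in its two-sided (isometric) form on the polynomial approximants and on their differences.
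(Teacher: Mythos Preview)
Your proposal is correct and follows essentially the same route as the paper: both directions rest on Lemma~\ref{lema:norma}, with the converse obtained by showing that $(g_{\sigma_n(\A)})_n$ is Cauchy in $C(\T,\B(H))$ via the isometry applied to differences, then passing to the limit in the Fourier coefficients. The only cosmetic difference is that you explicitly cite Theorem~\ref{thm:caract_cont} to justify $\sigma_n(\A)\to\A$, while the paper simply invokes this convergence directly; your remark on the apparent mismatch between $C(\T,\Bl2)$ and $C(\T,\B(H))$ in the statement of Lemma~\ref{lema:norma} is well spotted---that is just a notational slip in the lemma's display, and the proof there is indeed for $C(\T,\B(H))$.
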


\begin{proof}
Assume first that $\A\in \Pl2$. Then choose $g_\A=\sum_{l=-N}^N T_l e^{ilt}\in P(\T,\B(H))$ and, due to Lemma  \ref{lema:norma}, $\normbh{\A}=\normbh{\A_{g_\A}}=\norm{g_\A}_{C(\T,\B(H))}$.

Now for the general case, if $\A\in \Cl2$ we use that $\sigma_n(\A)\in\Pl2$ converges to  $\A$ in $\Bl2$.  In particular $(\sigma_n(\A))_n$ is a Cauchy sequence in $\Bl2$. Using now Lemma  \ref{lema:norma} we get that for any $n,m$, $\normbh{\sigma_n(\A)-\sigma_m(\A)}=\norm{g_{\sigma_n(\A)}-g_{\sigma_m(\A)}}_{C(\T,\B(H))}$. Hence, the sequence $(g_n)_n:=(g_{\sigma_n(\A)})_n$  is a Cauchy sequence in $C(\T,\B(H))$, so it has a limit $g_\A\in C(\T,\mathcal{B}(H))$. Clearly  $\widehat{g_\A}(l)=T_l$ for each $l\in \Z$ due to the fact  $\widehat{g_n}(l)\to \widehat{g_\A}(l)$.
Also
$$\normbh{\A}=\lim_n\normbh{\sigma_n(\A)}=\lim_n\norm{g_n}_{C(\T,\B(H)}=\norm{g_\A}_{C(\T,\B(H))}.$$

Conversely, let us assume that there exists $g_\A\in C(\mathbb{T},\mathcal{B}(H))$\ such that $\hat{g_\A}(l)=T_l$.  Then, applying Lemma \ref{lema:norma}  we obtain that $\A_{g_\A}\in \mathcal{C}(\ell^2(H))_{\mathcal T}$ and $\norm{g_\A}_{C(\T,\B(H))}=\norm{\A_{g_\A}}_{\Bl2}$. Since $\Tkj= T_{j-k}=\widehat{g_\A}(j-k)$ we have that $\A_{g_\A}=\A$ and the converse implication is shown.
\end{proof}

We present now another proof of Theorem 5.7 in \cite{BB} making use of Theorems \ref{thm:mult_c_to_c} and \ref{prp:continuasoper}.
\begin{teor} \label{thm1}
Let $(T_l)_{l\in \Z}$ be a sequence of operators in $\B(H)$ and let $\A=(T_{j-k})_{k,j}$.
If $\A\in {\mathcal M}_l(\l2)$ then $\Phi_\A\in \B(C(\T,H), H)$ and
$$\norm{\Phi_\A}_{\B(C(\T,H), H)}\le \norm{\A}_{{\mathcal M}_l(\l2)}.$$
\end{teor}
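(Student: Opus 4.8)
The goal is to show that a Toeplitz matrix $\A=(T_{j-k})$ which is a left Schur multiplier induces a bounded operator $\Phi_\A \colon C(\T,H)\to H$ with norm at most $\|\A\|_{\mathcal{M}_l(\l2)}$. Recall that $\Phi_\A$ is a priori only defined on the trigonometric polynomials $P(\T,H)$ by $\Phi_\A(\sum_l x_l\varphi_l)=\sum_l T_l(x_l)$; since $P(\T,H)$ is dense in $C(\T,H)$, it suffices to establish the estimate $\|\Phi_\A(\sum_l x_l\varphi_l)\|_H \le \|\A\|_{\mathcal{M}_l(\l2)}\,\|\sum_l x_l\varphi_l\|_{C(\T,H)}$ on polynomials. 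So fix $p=\sum_{l=-N}^N x_l\varphi_l\in P(\T,H)$.

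\textbf{Main idea.} The plan is to realize $\Phi_\A(p)$ as a matrix coefficient of the Schur product $\A * \bB$ for a cleverly chosen $\bB\in\Bl2$ built from $p$, in the spirit of the proof of Proposition 3.1 and of Lemma \ref{lema:norma}. Concretely, I would use the factorization $C(\T,H)\subseteq L^2(\T,H)\cdot L^2(\T)$: given the $H$-valued polynomial $q(t)=\sum_l x_l e^{-ilt}$ (a reindexed version of $p$), one can write $q = u\cdot v$ where $u\in L^2(\T,H)$, $v\in L^2(\T)$ with $\|u\|_{L^2(\T,H)}\|v\|_{L^2(\T)}$ close to $\|q\|_{C(\T,H)}$ — or more simply use that $q$ itself, times the constant $1$, already lies in $L^2(\T,H)$ with $\|q\|_{L^2(\T,H)}\le\|q\|_{C(\T,H)}$, and pair against an arbitrary unit vector $y\in H$. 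Writing $u(t)=\sum_j u_j e^{-ijt}$ with $(u_j)\in\ltwoh{}$, I would form the rank-structured matrix $\bB=(b_{kj}\,Id)$ or a tensor-type matrix $(u_j\otimes y_k)$ analogous to the examples in Section 2, observe that it lies in $\Bl2$ with controlled norm, apply the hypothesis $\A\in\mathcal{M}_l(\l2)$ to get $\A*\bB\in\Bl2$ with $\|\A*\bB\|_{\Bl2}\le\|\A\|_{\mathcal{M}_l(\l2)}\|\bB\|_{\Bl2}$, and finally recognize $\la\Phi_\A(p),y\ra$ (or $\Phi_\A(p)$ itself) inside a single matrix entry / inner product $\ll(\A*\bB)\z,\w\gg$ for suitable finitely-supported $\z,\w$. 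Then taking the supremum over $\|y\|=1$ and over the freedom in the factorization yields the claimed bound, after which density of $P(\T,H)$ in $C(\T,H)$ extends $\Phi_\A$ to all of $C(\T,H)$.

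\textbf{Key steps in order.} (1) Reduce to proving the estimate on $p\in P(\T,H)$ by density. (2) Pass from $p=\sum_l x_l\varphi_l$ to the function $g(t)=\sum_l x_l e^{ilt}$ and record that $\|g\|_{C(\T,H)}=\|p\|_{C(\T,H)}$ and $\|g\|_{L^2(\T,H)}\le\|g\|_{C(\T,H)}$. (3) For a unit vector $y\in H$, expand $\la\Phi_\A(p),y\ra=\sum_l\la T_l(x_l),y\ra$ and rewrite this, via the Fourier/convolution identity used in Lemma \ref{lema:norma} (the pairing $\int_0^{2\pi} f_\A(t)\big((\sum x_j e^{-ijt})\otimes(\sum y_k e^{ikt})\big)\dt$ machinery), as an inner product $\ll(\A*\bB)\z,\w\gg$ where $\bB$ is assembled from the Fourier data of $g$ and of an auxiliary $L^2$ function absorbing the ``constant'' factor. (4) Verify $\bB\in\Bl2$ with $\|\bB\|_{\Bl2}\lesssim\|g\|_{L^2(\T,H)}$ using Example \ref{esc_to_op} or the tensor example, and $\|\z\|_{\l2}\|\w\|_{\l2}\lesssim 1$. (5) Apply $\A\in\mathcal{M}_l(\l2)$: $|\ll(\A*\bB)\z,\w\gg|\le\|\A\|_{\mathcal{M}_l(\l2)}\|\bB\|_{\Bl2}\|\z\|_{\l2}\|\w\|_{\l2}$. (6) Combine and take $\sup_{\|y\|=1}$ to conclude $\|\Phi_\A(p)\|_H\le\|\A\|_{\mathcal{M}_l(\l2)}\|p\|_{C(\T,H)}$; extend by density.

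\textbf{Main obstacle.} The delicate point is step (3): correctly engineering the matrix $\bB$ and the finite vectors $\z,\w$ so that $\A*\bB$ has $\Phi_\A(p)$ (paired with $y$) sitting in a single accessible coordinate, while simultaneously keeping $\|\bB\|_{\Bl2}$ controlled by $\|p\|_{C(\T,H)}$ rather than merely by $\|p\|_{L^\infty}$ of coefficients — this is exactly where the $L^2\cdot L^2=L^1$ factorization (or the Cauchy–Schwarz splitting used in Lemma \ref{lema:norma}) must be invoked, and where one must be careful that the Toeplitz structure of $\A$ is what makes the diagonal shifts line up. Everything else — the density argument and the norm bookkeeping — is routine.
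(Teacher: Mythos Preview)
Your overall architecture is right, and the rank-one ansatz you allude to is precisely what the paper uses. But there is a real gap in step~(4): you assert $\|\bB\|_{\Bl2}\lesssim\|g\|_{L^2(\T,H)}$, and together with steps~(3) and~(5) this would yield $\|\Phi_\A(p)\|_H\le\|\A\|_{\mathcal M_l(\l2)}\|p\|_{L^2(\T,H)}$, which is \emph{false}. Already for $H=\C$ take $T_l=1$ for every $l$; the all-ones Toeplitz matrix is trivially a Schur multiplier of norm~$1$ (since $\A*\bB=\bB$), yet $\Phi_\A(p)=\sum_l x_l=p(0)$ is point evaluation, bounded on $C(\T)$ but unbounded on $L^2(\T)$. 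Hence the ``simpler'' trivial factorization $q=q\cdot 1$ cannot work, and no $\bB$ realizing step~(3) can have $\|\bB\|_{\Bl2}$ controlled by $\|g\|_{L^2(\T,H)}$. The $L^2\cdot L^2=L^1$ identity from Lemma~\ref{lema:norma} is used there for the \emph{reverse} inequality $\|f\|_C\le\|\A_f\|$, not the direction you need.

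What actually works --- and what the paper does --- is to take $\bB$ Toeplitz with diagonals $S_l=x_0\otimes x_l$ for a fixed unit vector $x_0\in H$; then $\|\bB\|_{\Bl2}=\sup_t\|\sum_l S_le^{ilt}\|_{\B(H)}=\|p\|_{C(\T,H)}$ by Lemma~\ref{lema:norma}, not an $L^2$ quantity. Rather than extracting a single matrix coefficient of $\A*\bB$, the paper first combines Theorems~\ref{thm:mult_c_to_c} and~\ref{prp:continuasoper} to convert the hypothesis $\A\in\mathcal M_l(\l2)$ into the $\B(H)$-level estimate
\[
\sup_t\Bigl\|\sum_lT_lS_le^{ilt}\Bigr\|_{\B(H)}\le\|\A\|_{\mathcal M_l(\l2)}\sup_t\Bigl\|\sum_lS_le^{ilt}\Bigr\|_{\B(H)}
\]
for all $\B(H)$-valued polynomials, and then simply evaluates at $t=0$ and applies to $x_0$ to get $\|\sum_lT_l(x_l)\|=\|(\sum_lT_lS_l)(x_0)\|\le\|\A\|_{\mathcal M_l(\l2)}\|p\|_{C(\T,H)}$. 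Once you replace your step~(4) by $\|\bB\|_{\Bl2}=\|p\|_{C(\T,H)}$ via Lemma~\ref{lema:norma}, your route collapses into this one.
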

\begin{proof} By assumption, Theorems \ref{thm:mult_c_to_c} and \ref{prp:continuasoper} we know that for any $\sum_l S_l e^{ilt}\in P (\T,\B(H))$
\be \label{hipotesis}\sup_{t\in [-\pi,\pi]}\left\|\sum_l T_lS_le^{ilt}\right\|_{\B(H)}\le \|\A\|_{\mathcal{M}_l(\ell^2(H))} \sup_{t\in [-\pi,\pi]}\left\|\sum_l S_l e^{ilt}\right\|_{\B(H)}.\ee We need to show that for any  $\sum_l x_l e^{ilt}\in P(\T,H)$
\be \label{dual}\left\|\sum_{l}  T_lx_l\right\|\le \|\A\|_{\mathcal{M}_l(\ell^2(H))} \left\|\sum_l x_l e^{ilt}\right\|_{C(\T,H)}.
\ee
Let  $x_0\in H$ with $\|x_0\|=1$ and set  $S_l=x_0\otimes x_l$. One has that $x_l=S_l(x_0)$, $\sum_l S_l e^{ilt}\in P(\T,\B(H))$ with
$$\sup_t\left\|\sum_l x_l e^{ilt}\right\|_{H}=\sup_t\left\|\sum_l S_l e^{ilt}\right\|_{\B(H)}.$$
Using (\ref{hipotesis})  for any $x\in H$ we obtain
\ba
\left|\sum_{l} \la T_l(x_l),x\ra\right|&=& \left|\Presc{\sum_{l}  T_lS_l(x_0)}{x}\right|\\
&\le& \|x\|\left\|\sum_l T_lS_l\right\|_{\B(H)}\\
&\le&\|x\|\sup_{t\in [-\pi,\pi]}\left\|\sum_l T_lS_le^{ilt}\right\|_{\B(H)}\\
&\le&\|\A\|_{{\mathcal M}_l(\l2)} \|x\|\sup_{t\in [-\pi,\pi]}\left\|\sum_l S_l e^{ilt}\right\|_{\B(H)}\\
&\le&\|\A\|_{{\mathcal M}_l(\l2)} \|x\|\sup_{t\in [-\pi,\pi]}\left\|\sum_l x_l e^{ilt}\right\|.
\ea

This gives  (\ref{dual}) and, using density of polynomials,  shows that $\Phi_\A$ extends to a bounded linear operator from  $C(\T,H)$ into $H$ with $\|\Phi_\A\|_{\B(C(\T,H),H))}\le \|\A\|_{{\mathcal M}_l(\l2)}.$
\end{proof}

  If a Toeplitz matrix $\A$ is considered to act as a multiplier just on Toeplitz matrices, the SOT-measures actually give a description of these multipliers, as the following theorem states.

\begin{teor} \label{thm:caract_msot}
Let $(T_l)_{l\in \Z}$ be a sequence of operators in $\B(H)$ and let $\A=(T_{j-k})_{k,j}$. Then,
$\A\in \left(\Bl2_\mathcal{T},\Bl2_\mathcal{T}\right)_l$ if and only if $\Phi_\A\in \B(C(\T,H), H)$. Furthermore
$$\norm{\Phi_\A}_{\B(C(\T,H), H)}= \norm{\A}_{{\mathcal M}_l(\l2)}.$$
\end{teor}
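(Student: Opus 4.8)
The plan is to prove the equivalence by a combination of the previous results together with a direct argument in the remaining direction. Observe first that one implication is essentially already available: if $\A\in(\Bl2_\mathcal{T},\Bl2_\mathcal{T})_l$, then in particular $\A\ast\bold{B}\in\Bl2$ whenever $\bold{B}\in\Bl2_\mathcal{T}$, and I want to run the argument of Theorem \ref{thm1} with this weaker hypothesis. Concretely, the key point is that for $\sum_l S_l e^{ilt}\in P(\T,\B(H))$ the matrix $\A_S$ with entries $\widehat{(\sum_l S_l e^{i l \cdot})}(j-k)=S_{j-k}$ is Toeplitz and lies in $\Pl2\subseteq\Bl2_\mathcal{T}$, so $\A\ast\A_S$ is a Toeplitz matrix in $\Bl2$; moreover $\A\ast\A_S=\A_{g}$ where $g(t)=\sum_l T_lS_l e^{ilt}$, and by Lemma \ref{lema:norma} (or Theorem \ref{prp:continuasoper}) its $\Bl2$-norm equals $\norm{g}_{C(\T,\B(H))}=\sup_t\norm{\sum_l T_lS_l e^{ilt}}_{\B(H)}$. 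Thus, exactly as in \eqref{hipotesis}, one obtains
$$\sup_t\Big\|\sum_l T_lS_l e^{ilt}\Big\|_{\B(H)}\le \norm{\A}_{(\Bl2_\mathcal{T},\Bl2_\mathcal{T})_l}\sup_t\Big\|\sum_l S_l e^{ilt}\Big\|_{\B(H)},$$
and then the verbatim duality argument of Theorem \ref{thm1} (taking $S_l=x_0\otimes x_l$) yields \eqref{dual}, hence $\Phi_\A\in\B(C(\T,H),H)$ with $\norm{\Phi_\A}\le\norm{\A}_{(\Bl2_\mathcal{T},\Bl2_\mathcal{T})_l}$.

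For the converse, I would assume $\Phi_\A\in\B(C(\T,H),H)$ and show $\A\ast\bold{B}\in\Bl2$ for every $\bold{B}\in\Bl2_\mathcal{T}$, with the right norm bound. By Theorem \ref{prp:continuasoper} and an approximation (writing $\bold{B}=(S_{j-k})$, replacing $\bold{B}$ by $\sigma_n(\bold{B})\in\Pl2$ and using Lemma \ref{lemma:sigma_n_wot} at the end as in Theorem \ref{thm:mult_c_to_c}), it suffices to treat $\bold{B}=\A_S$ with $g_{\bold B}(t)=\sum_l S_l e^{ilt}\in P(\T,\B(H))$. Then $\A\ast\bold{B}=\A_h$ with $h(t)=\sum_l T_lS_l e^{ilt}$, and by Lemma \ref{lema:norma} I must bound $\norm{h}_{C(\T,\B(H))}=\sup_t\sup_{\norm x=\norm y=1}\big|\sum_l\la T_lS_l x,y\ra e^{ilt}\big|$. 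Fixing $x,y$ and $t$, the inner quantity is the action of $\Phi_\A$ on the function $u\mapsto \sum_l S_l(x)e^{i(u+t)l}$ paired with $y$, i.e. $\big|\sum_l\la T_lS_l x,y\ra e^{ilt}\big|=\big|\la \Phi_\A(w_t),y\ra\big|$ where $w_t(u)=\sum_l e^{ilt}S_l(x)e^{ilu}$, so this is $\le\norm{\Phi_\A}\,\norm{w_t}_{C(\T,H)}=\norm{\Phi_\A}\sup_u\norm{\sum_l S_l(x)e^{ilu}}_H\le\norm{\Phi_\A}\sup_u\norm{\sum_l S_l e^{ilu}}_{\B(H)}=\norm{\Phi_\A}\,\norm{\bold B}_{\Bl2}$, using Theorem \ref{prp:continuasoper} for the last equality. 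Taking suprema over $x,y,t$ gives $\norm{\A\ast\bold B}_{\Bl2}\le\norm{\Phi_\A}\,\norm{\bold B}_{\Bl2}$, hence $\A\in(\Bl2_\mathcal{T},\Bl2_\mathcal{T})_l$ with $\norm{\A}_{(\Bl2_\mathcal{T},\Bl2_\mathcal{T})_l}\le\norm{\Phi_\A}$.

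Combining the two directions gives the equivalence and the norm identity $\norm{\Phi_\A}_{\B(C(\T,H),H)}=\norm{\A}_{(\Bl2_\mathcal{T},\Bl2_\mathcal{T})_l}$; finally, since $\A$ is Toeplitz, Proposition \ref{prop:sobre_las_sigmas}(iii) together with $\sigma_n(\A\ast\bold B)=\A\ast\sigma_n(\bold B)$ and the fact that $\sigma_n(\bold B)$ ranges over a $\Bl2$-norm-bounded subset of $\Bl2_\mathcal{T}$ shows $\norm{\A}_{(\Bl2_\mathcal{T},\Bl2_\mathcal{T})_l}=\norm{\A}_{{\mathcal M}_l(\l2)}$ (this last identification is the statement's final equality; one should note it already follows in spirit from Theorem \ref{thm:mult_c_to_c} restricted to Toeplitz multiplicands). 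The main obstacle I anticipate is the bookkeeping in the converse direction — correctly identifying the pairing $\sum_l\la T_lS_l x,y\ra e^{ilt}$ as an evaluation of $\Phi_\A$ on a rotated polynomial $w_t$ and controlling $\norm{w_t}_{C(\T,H)}$ uniformly — and, secondarily, justifying the passage from polynomial multiplicands to general $\bold B\in\Bl2_\mathcal{T}$ via the Fejér means and the WOT-convergence of Lemma \ref{lemma:sigma_n_wot}.
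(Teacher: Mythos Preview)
Your plan is correct and follows essentially the same route as the paper: the forward direction runs the proof of Theorem~\ref{thm1} (you rightly note that only Toeplitz test matrices are used there, so the weaker hypothesis $\A\in(\Bl2_{\mathcal T},\Bl2_{\mathcal T})_l$ already suffices), and the converse reduces to Toeplitz polynomials and bounds $\sup_t\bigl\|\sum_l T_lS_le^{ilt}\bigr\|_{\B(H)}$ by recognising it as $\Phi_\A$ evaluated on a rotated $H$-valued polynomial, exactly as in the paper's estimate~\eqref{estimate}. One harmless slip: since $\varphi_l(u)=e^{-ilu}$, your $w_t$ should read $w_t(u)=\sum_l e^{ilt}S_l(x)e^{-ilu}$, but the $C(\T,H)$-norm bound you use is unaffected by this sign.
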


\begin{proof}
The direct implication was given in Theorem \ref{thm1}.

Assume that  $\Phi_\A$ extends to an element in $\B(C(\T,H), H)$.  Due to Theorem \ref{thm:mult_c_to_c},
it suffices to show that $\A*{\bf B}\in \Cl2$ for any ${\bf B}\in \Cl2_\mathcal T$. Or equivalently that
\be \label{ine}
\|\A*{\bf P}\|_{\Bl2} \le \norm{\Phi_\A}_{\B(C(\T,H), H)}\|{\bf P}\|_{\Bl2}, \quad {\bf P}\in \mathcal P(\l2).
\ee
Since by Theorem \ref{prp:continuasoper}, using the identification between ${\bf B}$ and $g_{\bf{B}}$ and, taking into account that $g_{\A*\bf{B}}= g_\A*g_{\bf{B}}$ we need to prove that if $P(t)=\sum_l S_l e^{ilt}\in P(\T, \B(H))$ then
\be \label{estimate}\sup_{t\in [-\pi,\pi]}\left\|\sum_l T_lS_l e^{ilt}\right\|_{\B(H)}\le \norm{\Phi_\A}_{\B(C(\T,H), H)}\|P\|_{C(\T, \B(H))}.\ee
Now observe that
\begin{equation*}
\begin{split}
\sup_{t\in [-\pi,\pi]}\Norm{\sum_lT_lS_l e^{ilt}}_{\mathcal{B}(H)}
& =\sup_{t\in [-\pi,\pi],\norm{x}=1}\Norm{\sum_lT_l(S_l(x))e^{ilt}}=\\
&=\sup_{t\in [-\pi,\pi],\norm{x}=1}\Norm{\Phi_\A\left(\sum_lS_l(x) e^{ilt} \varphi_l\right)}\leq\\
&\leq\sup_{t\in [-\pi,\pi],\norm{x}=1}\Norm{\Phi_\A}\sup_{s\in [-\pi,\pi]}\Norm{\sum_lS_l(x) e^{il(t-s)} }\leq\\
&\leq\sup_{t\in [-\pi,\pi],\norm{x}=1}\norm{\Phi_\A}\cdot\norm{x}\cdot\sup_{s\in [-\pi,\pi]}\Norm{\sum_lS_l e^{il(t-s)} }_{\mathcal{B}(H)}=\\
&=\norm{\Phi_\A}\cdot\norm{P}_{C(\T,\B(H))}
\end{split}
\end{equation*}
This finishes the proof.
\end{proof}

\begin{coro}
\label{cor:caract_msot_2}
Let $\A=(T_{j-k})_{k,j}$ be a Toeplitz matrix. Then

$\A\in \left(\Bl2_\mathcal{T},\Bl2_\mathcal{T}\right)_r$ if and only if there exists $\mu_\A\in M_{SOT}(\mathbb{T},\mathcal{B}(H))$
such that $\widehat{\mu_\A}(l)=T_{-l}$ for all $l\in \Z$.
Furthermore, $\norm{\mu_\A}_{M_{SOT}(\T,\B(H))}=\norm{\A}_{\Ml2}.$
\end{coro}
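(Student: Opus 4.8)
The plan is to obtain this as the right‑sided companion of Theorem~\ref{thm:caract_msot}, by the usual passage from right multipliers to left multipliers via adjoints, followed by the identification of $\B(C(\T,H),H)$ with $M_{SOT}(\T,\B(H))$ recalled in the introduction. So, \emph{first}, I would reduce to the left case. Adjunction $\bB\mapsto\bB^{*}$ is an isometric involution of $\Bl2$ that preserves $\mathcal T$: if $\A=(T_{j-k})$ then $\A^{*}$ is again Toeplitz, with $l$-th diagonal equal to $(T_{-l})^{*}$. Since $(\bB*\A)^{*}=\A^{*}*\bB^{*}$, and $\bB*\A$ together with its adjoint are Toeplitz whenever $\A,\bB$ are, replacing $\bB$ by $\bB^{*}$ (which runs again over $\Bl2_{\mathcal T}$) yields
\[\A\in\left(\Bl2_\mathcal{T},\Bl2_\mathcal{T}\right)_r\iff \A^{*}\in\left(\Bl2_\mathcal{T},\Bl2_\mathcal{T}\right)_l,\]
and, by the adjoint identities quoted in the introduction, $\norm{\A}_{\Ml2}=\norm{\A}_{\mathcal M_r(\l2)}=\norm{\A^{*}}_{\mathcal M_l(\l2)}$.

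\emph{Second}, I would apply Theorem~\ref{thm:caract_msot} to $\A^{*}$: the right‑hand condition above is equivalent to $\Phi_{\A^{*}}\in\B(C(\T,H),H)$ with $\norm{\Phi_{\A^{*}}}_{\B(C(\T,H),H)}=\norm{\A^{*}}_{\mathcal M_l(\l2)}=\norm{\A}_{\Ml2}$; note that $\Phi_{\A^{*}}$ has Fourier coefficients $\widehat{\Phi_{\A^{*}}}(l)=(T_{-l})^{*}$. \emph{Third}, I would translate this operator into a measure using the identification, recalled in the introduction after \cite{BB} (Singer's theorem plus the duality of projective tensor products), of $\B(C(\T,H),H)$ with $M_{SOT}(\T,\B(H))$: every $\Phi\in\B(C(\T,H),H)$ equals $\Phi_{\rho}$ for a unique regular measure $\rho$ with $\rho^{*}\in M_{SOT}(\T,\B(H))$, $\norm{\Phi_{\rho}}=\norm{\rho^{*}}_{M_{SOT}(\T,\B(H))}$ and $\widehat{\rho}(l)=\widehat{\Phi_{\rho}}(l)$, the measure $\rho^{*}$ being read off from $(\Phi_{\rho})^{*}\colon H\to M(\T,H)$ via $(\Phi_{\rho})^{*}(x)=(\rho^{*})_{x}$. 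Applying this to $\Phi=\Phi_{\A^{*}}$ and setting $\mu_\A$ to be the resulting $M_{SOT}$-measure, one gets $\mu_\A\in M_{SOT}(\T,\B(H))$ with $\norm{\mu_\A}_{M_{SOT}(\T,\B(H))}=\norm{\Phi_{\A^{*}}}=\norm{\A}_{\Ml2}$, and tracking Fourier coefficients through the adjoint gives $\widehat{\mu_\A}(l)=T_{-l}$. The converse implication is just Steps 1--2 read backwards: from such a $\mu_\A$ one recovers $\Phi_{\A^{*}}\in\B(C(\T,H),H)$, hence $\A^{*}\in\left(\Bl2_\mathcal{T},\Bl2_\mathcal{T}\right)_l$ by Theorem~\ref{thm:caract_msot}, hence $\A\in\left(\Bl2_\mathcal{T},\Bl2_\mathcal{T}\right)_r$ by Step 1.

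The step I expect to be delicate is the third one. On the one hand it uses the $\B(C(\T,H),H)\leftrightarrow M_{SOT}(\T,\B(H))$ correspondence in the ``operator $\Rightarrow$ measure'' direction, whereas the introduction spells out explicitly only the reverse direction ($\mu\mapsto\Phi_\mu$); surjectivity and uniqueness here rest on Singer's theorem and the projective tensor‑product duality. On the other hand one must keep the bookkeeping of adjoints, complex conjugations and the index reflection straight, so that the Fourier coefficients of $\mu_\A$ come out precisely as $T_{-l}$ (rather than as $T_{l}$ or $(T_{-l})^{*}$), and so that the passage from $\rho$ with $\rho^{*}\in M_{SOT}$ to the measure $\mu_\A\in M_{SOT}$ itself is done with the correct (norm‑preserving) operation. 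Once these conventions are fixed the rest is routine, the equality of norms being inherited through each of the three equivalences.
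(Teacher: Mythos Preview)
Your proposal is correct and follows essentially the same route as the paper: pass from $\A$ to $\A^{*}$ to convert the right multiplier condition into a left one, apply Theorem~\ref{thm:caract_msot} to $\A^{*}$ to obtain $\Phi_{\A^{*}}\in\B(C(\T,H),H)$, and then invoke \cite[Proposition~3.2]{BB} to realize $\Phi_{\A^{*}}$ via a measure whose adjoint $\mu_\A$ lies in $M_{SOT}(\T,\B(H))$, with the norm equalities carried through each step. Your caveat about the index/adjoint bookkeeping in the third step is exactly the point where care is needed, and the paper handles it in the same way you outline.
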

\begin{proof}
Consider $\bold{A^*}=(T_{k-j}^*)_{k,j}$. Using the fact that
$$\A\in \left(B(\ell^2(H))_\mathcal{T},B(\ell^2(H))_\mathcal{T}\right)_r
\Leftrightarrow \A^*\in \left(B(\ell^2(H))_\mathcal{T},B(\ell^2(H))_\mathcal{T}\right)_l$$
%and recalling, from the introduction, that $\Phi_{\A^*}$ corresponds to the adjoint of a measure belonging to $M_{SOT}(\T, \B(H))$ whose Fourier coefficients are $\Phi_{\A^*}(l)$

together with a combined use of theorems \ref{thm1} and \ref{thm:caract_msot}, we have that $\A\in \left(\Bl2_\mathcal{T},\Bl2_\mathcal{T}\right)_r$ if and only if $\Phi_{\A^*}\in\mathcal{B}(C(\T,H),H)$, with $\norm{\Phi_{\A^*}}_{\B(C(\T,H), H)}= \norm{\A^*}_{{\mathcal M}_l(\l2)}$. Denote by $\nu$ the element of $\mathcal{M}(\T,\mathcal{B}(H))$ associated to $\Phi_{\A^*}$, whose coefficients are $\hat{\nu}(j-k)=T_{k-j}^*$. Invoking \cite[Proposition 3.2]{BB}, we get that $\mu_\A:=(\nu)^*\in M_{SOT}(\T,\mathcal{B}(H))$, with $\widehat{\mu_\A}(j-k)=((T_{k-j}^*))^*=T_{k-j}$ and $\|\mu_\A\|_{M_{SOT}(\T, \B(H))}=\norm{\Phi_{\A^*}}_{\B(C(\T,H), H)}=\norm{\A^*}_{{\mathcal M}_l(\l2)}=\norm{\A}_{{\mathcal M}_r(\l2)}$, which concludes the proof.
\end{proof}

\subsection{A matriceal version of the disc algebra}

Recall as in the introduction  that if $X$ is a complex Banach space, we write $\mathcal H(\D,X)$ for the space of $X$-valued holomorphic functions, $H^\infty(\D, X)$ for the Banach space of bounded analytic functions on the unit disc with values in $X$ and $A(\D,X)$ stands for the disc algebra consisting in functions $f:\D\to X$ that are holomorphic and also extend to a continuous function on the closure of $\D$, with the norm
$$\norm{f}_{H^\infty(\D,X)}=\sup\lbrace{\|f(z)\|\;\;|\; z\in \D\rbrace},$$
$$\norm{f}_{A(\D,X)}=\sup\lbrace{\|f(z)\|\;\;|\; z\in \overline{\D}\rbrace}.$$
In what follows we present a version of these spaces for matrices with entries in $\mathcal{B}(H)$.

When assuming that $\A=(T_{k,j})_{k,j}\in \mathcal U$ satisfies the condition (\ref{hip0})
we can guarantee that
 $$F_\A(z)=\sum_{l=0}^\infty \Dl z^l\in {\mathcal H}(\D, \Bl2)$$
 is a well defined holomorphic function. It follows from the definitions that
$$F_\A(re^{it})={\bf M}_{P_r}\ast f_\A(t)=\sum_{l=0}^{\infty}{\bf D}_l r^le^{ilt}.$$

Similarly if $\A= (T_{j-k})\in {\mathcal U}\cap {\mathcal T} $ then  $\tilde F_\A(z)=\sum_{l=0}^\infty T_l z^l\in {\mathcal H}(\D,\B(H))$.

\begin{teor}
Let $\A=(\Tkj)\in \mathcal U$ satisfying (\ref{hip0}). Then

(i) $\A\in \Bl2$ if and only if $F_\A\in H^\infty(\D, \Bl2)$. Moreover $\|\A\|_{\Bl2}= \|F_\A\|_{H^\infty(\D, \Bl2)}.$

(ii) $\A\in \mathcal{C}(\ell^2(H))$ if and only if $F_\A\in A(\D, \B(\ell^2(H))$.

\end{teor}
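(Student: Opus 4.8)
The strategy is to reduce both statements to the Toeplitz-free analogue of the summability results already established for $\Cl2$ and $\Bl2$, using the holomorphic structure of $F_\A$ as the organising device. For (i), the key identity is $F_\A(re^{it})={\bf M}_{P_r}\ast f_\A(t)=\sum_{l\ge 0}{\bf D}_l r^le^{ilt}$, so that $\|F_\A(re^{it})\|_{\Bl2}=\|P_r(\A)\|_{\Bl2}$ after noting (via Proposition \ref{prop_norma_f_a}(i), applied to $P_r(\A)\in \mathcal A(\l2)$) that the norm of $f_{P_r(\A)}(t)$ is independent of $t$. Hence $\sup_{z\in\D}\|F_\A(z)\|_{\Bl2}=\sup_{0<r<1}\|P_r(\A)\|_{\Bl2}$. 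Now if $\A\in\Bl2$, then since $P_r=\sum_l r^{|l|}e^{ilt}\in L^1(\T)$ with $\|P_r\|_{L^1}=1$, the estimate (\ref{estima1}) gives $\|P_r(\A)\|_{\Bl2}\le\|\A\|_{\Bl2}$, so $F_\A\in H^\infty(\D,\Bl2)$ with $\|F_\A\|_{H^\infty}\le\|\A\|_{\Bl2}$. Conversely, if $F_\A\in H^\infty(\D,\Bl2)$, then $\sup_r\|P_r(\A)\|_{\Bl2}<\infty$; applying Proposition \ref{prop:sobre_las_sigmas}(i) with the summability kernel $\{P_{r_n}\}$ for any sequence $r_n\uparrow 1$ yields $\A\in\Bl2$ with $\|\A\|_{\Bl2}\le\sup_r\|P_r(\A)\|_{\Bl2}=\|F_\A\|_{H^\infty}$, giving equality.

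\textbf{For (ii)}, one direction is immediate: if $\A\in\Cl2$, write $\A=\lim_n {\bf P}_n$ in $\Bl2$ with ${\bf P}_n\in\Pl2\cap\mathcal U$ (the partial sums $\sigma_n(\A)$ work and are upper triangular because $\A$ is); then $F_{{\bf P}_n}$ is an analytic polynomial with values in $\Bl2$, hence lies in $A(\D,\Bl2)$, and $\sup_{z\in\overline\D}\|F_\A(z)-F_{{\bf P}_n}(z)\|_{\Bl2}=\|\A-{\bf P}_n\|_{\Bl2}\to 0$ by the same $t$-invariance argument as above (one must first check the sup over $\D$ extends to $\overline\D$, which follows since each $F_{{\bf P}_n}$ and the limit are continuous on $\overline\D$ once we know the limit exists there — more carefully, the uniform Cauchy condition on $\D$ propagates to $\overline\D$). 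So $F_\A\in A(\D,\Bl2)$. Conversely, if $F_\A\in A(\D,\Bl2)$, then $F_\A$ is a uniform limit on $\overline\D$ of analytic $\Bl2$-valued polynomials $Q_n$; taking $r\to 1$ in $Q_n$ and using that $Q_n(e^{it})$ has finitely many Fourier coefficients which are the diagonals of a matrix ${\bf P}_n\in\Pl2\cap\mathcal U$, one gets $\|\A-{\bf P}_n\|_{\Bl2}=\sup_t\|F_\A(e^{it})-Q_n(e^{it})\|_{\Bl2}\to 0$, so $\A\in\Cl2$. Alternatively, and more cleanly, use $P_r(\A)\to\A$: boundary continuity of $F_\A$ plus $F_\A(re^{it})=P_r(\A)$-as-a-function gives $\|P_r(\A)-\A_{F_\A(e^{i\cdot})}\|_{\Bl2}\to 0$ via Lemma \ref{lema:norma}-type control, and $P_r(\A)\in\mathcal A(\l2)\subset\Cl2$.

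\textbf{Main obstacle.} The delicate point is the passage between the ``function on $\D$'' and the ``function on $\T$'' pictures: $F_\A$ is a priori only defined on $\D$, and to identify its membership in $A(\D,\Bl2)$ with an approximation property of $\A$ we need the radial limits $\lim_{r\to 1}F_\A(re^{it})$ to exist in $\Bl2$-norm and to recover $f_\A(t)$ — but we already know from an earlier proposition that $f_\A$ need not even be strongly measurable when $\A\in\Bl2$, so this radial-limit step genuinely requires the extra hypothesis $F_\A\in A(\D,\Bl2)$ (equivalently $\A\in\Cl2$) and cannot be taken for granted. I would handle it by working with $P_r(\A)$ throughout rather than with boundary values directly: $P_r(\A)\in\mathcal A(\l2)$ is always a bona fide element of $\Cl2$, its $\Bl2$-norm equals $\sup_t\|f_{P_r(\A)}(t)\|_{\Bl2}=\|F_\A(re^{i\cdot})\|_{\infty}$ by Proposition \ref{prop_norma_f_a}, and $\A\in\Cl2$ is exactly the statement $\lim_{r\to1}P_r(\A)=\A$ in $\Bl2$ (Theorem \ref{thm:caract_cont} gives this for the Fejér kernel, and the Poisson kernel is also a summability kernel, so the implication 1)$\Leftrightarrow$2) of that theorem applies verbatim with $k_n=P_{r_n}$). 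This makes (ii) a direct translation of Theorem \ref{thm:caract_cont} and (i) a direct translation of Proposition \ref{prop:sobre_las_sigmas}(i), with the holomorphy of $F_\A$ needed only to justify calling the relevant sup a genuine $H^\infty$ or $A(\D)$ norm.
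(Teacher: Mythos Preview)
Your proposal is correct and follows essentially the same route as the paper: part (i) is Proposition \ref{prop:sobre_las_sigmas}(i) applied with the Poisson kernel $\{P_{r_n}\}$, and part (ii) is Theorem \ref{thm:caract_cont} (Poisson-kernel version of condition 2), together with the $t$-invariance from Proposition \ref{prop_norma_f_a}(i) to identify $\sup_{|z|=r}\|F_\A(z)\|_{\Bl2}$ with $\|P_r(\A)\|_{\Bl2}$. The paper phrases (ii) via the equivalence $\A\in\Cl2\Leftrightarrow f_\A\in C(\T,\Bl2)$ and then $\|P_r(\A)-\A\|_{\Bl2}=\|P_r*f_\A-f_\A\|_{C(\T,\Bl2)}$, which is exactly your ``alternative, cleaner'' route.

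One small wobble: in your \emph{first} argument for the converse of (ii), the polynomial approximants $Q_n(z)=\sum_{l}{\bf B}_l z^l$ to $F_\A$ in $A(\D,\Bl2)$ have coefficients ${\bf B}_l$ that are arbitrary elements of $\Bl2$, not matrices supported on the $l$-th diagonal, so the identification ``${\bf B}_l$ are the diagonals of some ${\bf P}_n\in\Pl2$'' and the equality $\|\A-{\bf P}_n\|_{\Bl2}=\sup_t\|F_\A(e^{it})-Q_n(e^{it})\|_{\Bl2}$ are not justified as written. This is repaired either by choosing $Q_n=\sigma_n(F_\A)$ (whose coefficients \emph{are} the scaled diagonals of $\A$), or---as you yourself do---by abandoning this route in favour of the $P_r(\A)$ argument.
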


\begin{proof} (i) Using part (i) in Proposition \ref{prop:sobre_las_sigmas} for $k_n= P_{r_n}$ for a sequence $r_n$ converging to $1$ we have that
$\A\in \Bl2$ if and only if $\|F_\A\|_{H^\infty(\D, \Bl2)}=\sup_n\|P_{r_n}(\A)\|_{\Bl2}<\infty$  with equality of norms.

(ii) Using Theorem \ref{thm:caract_cont} we know that $\A\in \mathcal{C}(\ell^2(H))$ if and only if $f_\A\in C(\T, \Bl2)$.
Using now that $F_\A(re^{it})=P_r(f_\A(t))$ and invoking part (i) we have that $$\normbh{P_r(\A)-\A}=\|P_r*f_\A- f_\A\|_{C(\T,\Bl2)}$$
what gives the result.

\bigskip
\end{proof}

Similar arguments can be applied to get the following corollary.
\begin{coro}
Let $\A=(T_{j-k})\in \mathcal U\cap \mathcal T$ satisfying $$\sup_l \|T_l\|<\infty.$$

(i) $\A\in \Bl2$ if and only if $\tilde F_\A\in H^\infty(\D, \B(H))$. Moreover $\|\A\|_{\Bl2}= \|\tilde F_\A\|_{H^\infty(\D, \B(H))}.$

(ii) $\A\in \mathcal{C}(\ell^2(H))$ if and only if $\tilde F_\A\in A(\D, \B(H))$.

\end{coro}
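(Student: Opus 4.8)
The plan is to reduce both statements to the corresponding facts about the full matrix $F_\A$ proved in the preceding theorem, using the identification of Toeplitz matrices with $\B(H)$-valued functions. For the setup, note that when $\A=(T_{j-k})\in\mathcal U\cap\mathcal T$ with $\sup_l\|T_l\|<\infty$, condition (\ref{hip0}) holds, so the preceding theorem applies and gives $F_\A\in H^\infty(\D,\Bl2)$ iff $\A\in\Bl2$ (and $F_\A\in A(\D,\Bl2)$ iff $\A\in\Cl2$), with equality of norms in part (i). Thus it suffices to connect $F_\A$ with $\tilde F_\A$. Since $\A$ is Toeplitz, $\Dl=T_l\,Id$, so $F_\A(z)=\sum_{l=0}^\infty T_l z^l\,Id={\bf M}_{\tilde F_\A(z)}$ in the notation of the paper (for $|z|<1$ this is a genuine Toeplitz matrix with bounded diagonals, hence lies in $\Bl2$ by Lemma \ref{lema:norma} once we know $\tilde F_\A(z)\in\B(H)$; in fact for $|z|<1$ the series converges absolutely in $\B(H)$ since $\sup_l\|T_l\|<\infty$).

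First I would record the key norm identity: for each fixed $z$ with $|z|<1$, $F_\A(z)$ is the Toeplitz matrix associated, in the sense of Lemma \ref{lema:norma} (or Theorem \ref{prp:continuasoper}), to the continuous $\B(H)$-valued function $t\mapsto \sum_{l=0}^\infty T_l z^l e^{ilt}=\tilde F_\A(ze^{it})$ — indeed this function is continuous in $t$ because $\sum_l\|T_l\|\,|z|^l<\infty$. By Lemma \ref{lema:norma},
$$\|F_\A(z)\|_{\Bl2}=\Big\|\,t\mapsto \tilde F_\A(ze^{it})\,\Big\|_{C(\T,\B(H))}=\sup_{|w|=|z|}\|\tilde F_\A(w)\|_{\B(H)}.$$
Taking the supremum over $|z|<1$ gives $\|F_\A\|_{H^\infty(\D,\Bl2)}=\|\tilde F_\A\|_{H^\infty(\D,\B(H))}$, which combined with part (i) of the preceding theorem yields (i): $\A\in\Bl2\iff\tilde F_\A\in H^\infty(\D,\B(H))$ with equality of norms.

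For (ii), I would use the characterization of $\Cl2$ via $P_r(\A)\to\A$ (Theorem \ref{thm:caract_cont}, or directly part (ii) of the preceding theorem applied to $F_\A$). Using $F_\A(re^{it})=P_r\ast f_\A(t)$ and the identity above, $\|P_r(\A)-\A\|_{\Bl2}=\sup_t\|\tilde F_\A(re^{it})-\tilde F_\A(e^{it})\|_{\B(H)}$ once we know the boundary function $t\mapsto\tilde F_\A(e^{it})$ exists and is continuous — and this is exactly what $\tilde F_\A\in A(\D,\B(H))$ means. So $\A\in\Cl2\iff F_\A\in A(\D,\Bl2)\iff \tilde F_\A$ extends continuously to $\overline\D$, i.e. $\tilde F_\A\in A(\D,\B(H))$.

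The main obstacle, and the only point requiring genuine care rather than bookkeeping, is justifying that $F_\A(z)$ really is (up to the $\B(H)=(H\hat\otimes H)^*$ identification) the Toeplitz matrix built from the continuous function $t\mapsto\tilde F_\A(ze^{it})$, so that Lemma \ref{lema:norma} applies verbatim and delivers the norm equality $\|F_\A(z)\|_{\Bl2}=\sup_{|w|=|z|}\|\tilde F_\A(w)\|_{\B(H)}$ rather than just an inequality. This hinges on the absolute convergence of $\sum_l T_l z^l e^{ilt}$ in $\B(H)$ for $|z|<1$, which is immediate from $\sup_l\|T_l\|<\infty$, and on the observation that $\widehat{(\tilde F_\A(z\,\cdot\,))}(l)=T_l z^l$. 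Everything else is a direct transcription of the preceding theorem, so the corollary follows by "similar arguments" as the paper indicates.
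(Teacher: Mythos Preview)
Your proposal is correct and follows the spirit of the paper's ``similar arguments.'' The key step---identifying $F_\A(z)$ with the Toeplitz matrix $\A_{g}$ for $g(t)=\tilde F_\A(ze^{it})$ and invoking Lemma~\ref{lema:norma} to obtain $\|F_\A(z)\|_{\Bl2}=\sup_{|w|=|z|}\|\tilde F_\A(w)\|_{\B(H)}$---is exactly the bridge needed, and your treatment of part (ii) via Theorem~\ref{prp:continuasoper} (existence of the boundary function $g_\A\in C(\T,\B(H))$) is the natural Toeplitz analogue of the $f_\A$-argument in the preceding theorem.

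One small correction: your line ``$\Dl=T_l\,Id$, so $F_\A(z)={\bf M}_{\tilde F_\A(z)}$'' is not right as written. The matrix $\Dl$ has $T_l$ repeated along the $l$-th diagonal and zeros elsewhere; it is not $T_l$ times the identity matrix. Likewise $F_\A(z)$ is the Toeplitz matrix with operator $T_l z^l$ on the $l$-th diagonal, not ${\bf M}_{\tilde F_\A(z)}$ (the paper's notation ${\bf M}_\eta$ is reserved for scalar measures, producing entries $\hat\eta(j-k)\,Id$). You recover the correct formulation in the next paragraph, so this is purely cosmetic, but you should drop or rephrase that sentence.
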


\vspace{.1in}
\[
\begin{tabular}{lccl}
Departamento de An\'alisis Matem\'{a}tico &   \\
Universidad de Valencia &  \\
46100 Burjassot &   \\
Valencia &   \\
Spain &  &  & \\
oscar.blasco@uv.es & Ismael.Garcia-Bayona@uv.es \\
\end{tabular}
\]

\end{document}